\documentclass{article}
\pdfoutput=1

\usepackage[a4paper]{geometry}

\usepackage{pdfsync}

\usepackage[hidelinks,hypertexnames=false,hyperindex=true,pdfpagelabels,linktoc=all]{hyperref}
\urlstyle{same}

\usepackage{latexsym,amsfonts,amsmath,amssymb}
\usepackage{graphicx}

\usepackage[mathscr]{euscript}

\usepackage[usenames,dvipsnames,svgnames]{xcolor}

\usepackage{amsthm}

\newtheorem{theorem}{Theorem}[section]
\newtheorem{lemma}[theorem]{Lemma}
\newtheorem{algorithm}[theorem]{Algorithm}

\theoremstyle{definition}

\numberwithin{equation}{section}

\newcommand{\satop}[2]{\stackrel{\scriptstyle{#1}}{\scriptstyle{#2}}}

\newcommand{\bsgamma}{{\boldsymbol{\gamma}}}

\newcommand{\bsnu}{{\boldsymbol{\nu}}}
\newcommand{\bsh}{{\boldsymbol{h}}}
\newcommand{\bsp}{{\boldsymbol{p}}}
\newcommand{\bsell}{{\boldsymbol{\ell}}}

\newcommand{\bsv}{{\boldsymbol{v}}}
\newcommand{\bsx}{{\boldsymbol{x}}}
\newcommand{\bsq}{{\boldsymbol{q}}}

\newcommand{\bsz}{{\boldsymbol{z}}}
\newcommand{\bsw}{{\boldsymbol{w}}}

\newcommand{\bszero}{{\boldsymbol{0}}}

\newcommand{\rd}{{\mathrm{d}}}
\newcommand{\ri}{{\mathrm{i}}}
\newcommand{\bbR}{\mathbb{R}}
\newcommand{\bbZ}{{\mathbb{Z}}}
\newcommand{\bbN}{{\mathbb{N}}}

\newcommand{\calA}{{\mathcal{A}}}

\newcommand{\calO}{{\mathcal{O}}}

\newcommand{\mask}[1]{}

\newcommand{\supp}{{\mathrm{supp}}}
\newcommand{\setu}{{\mathfrak{u}}}

\newcommand{\setw}{{\mathfrak{w}}}

\begin{document}

\title
{Fast Component-by-component Construction of \\ Lattice Algorithms for
Multivariate Approximation \\ with POD and SPOD weights
}

\author{
    Ronald Cools\footnote{Department of Computer Science, KU Leuven,
        Celestijnenlaan 200A, 3001 Leuven, Belgium,
        \texttt{(ronald.cools|dirk.nuyens)@cs.kuleuven.be}}
\and
    Frances Y.~Kuo\footnote{School of Mathematics and Statistics,
        University of New South Wales, Sydney NSW 2052, Australia,
        \texttt{(f.kuo|i.sloan)@unsw.edu.au}}
\and
    Dirk Nuyens\footnotemark[1]
\and
    Ian H.~Sloan\footnotemark[2]
}

\date{October 2019}
\maketitle

\begin{abstract}
In a recent paper by the same authors, we provided a theoretical
foundation for the component-by-component (CBC) construction of lattice
algorithms for multivariate $L_2$ approximation in the worst case setting,
for functions in a periodic space with general weight parameters. The
construction led to an error bound that achieves the best possible rate of
convergence for lattice algorithms. Previously available literature
covered only weights of a simple form commonly known as \emph{product
weights}. In this paper we address the computational aspect of the
construction. We develop \emph{fast} CBC construction of lattice
algorithms for special forms of weight parameters, including the so-called
\emph{POD weights} and \emph{SPOD weights} which arise from PDE
applications, making the lattice algorithms truly applicable in practice.
With $d$ denoting the dimension and $n$ the number of lattice points, we
show that the construction cost is $\calO(d\,n\log(n) + d^2\log(d)\,n)$
for POD weights, and $\calO(d\,n\log(n) + d^3\sigma^2\,n)$ for SPOD
weights of degree $\sigma\ge 2$. The resulting lattice generating vectors
can be used in other lattice-based approximation algorithms, including
kernel methods or splines.
\\[2mm]
\textbf{AMS Subject Classification: }41A10, 41A15, 65D30, 65D32, 65T40.
\end{abstract}
\section{Introduction}

In the paper \cite{CKNS-part1} we provided a theoretical foundation for
the component-by-component (CBC) construction of lattice algorithms for
multivariate $L_2$ approximation in the worst case setting, for functions
in a periodic space with general weight parameters. The construction led
to an error bound that achieves the best possible rate of convergence for
lattice algorithms. In this paper we address the computational aspect of
the construction. We develop \emph{fast} CBC construction of lattice
algorithms for special forms of the weight parameters, including the
so-called \emph{POD weights} and \emph{SPOD weights} which arise from PDE
applications, making the lattice algorithms truly applicable in practice.

The motivation for our work is the desire to use lattice algorithms (and
eventually kernel algorithms) to approximate the solution of a PDE with
random coefficients~\cite{CDS10}, as a function of the stochastic
variables. Previous related works
\cite{KSS12,DKLNS14,GKNSSS15,KN16,GKNSS18b,KKS} have been on approximating
the integral (expected value) of a linear functional of the PDE solution
with respect to the stochastic variables, rather than on directly
approximating the PDE solution itself. However, prior to our paper
\cite{CKNS-part1}, the existing literature on lattice algorithms for
approximation does not allow for weights of the POD or SPOD form. The
combination of the new theory in \cite{CKNS-part1} and the new algorithms
in this paper therefore provide the essential ingredients to apply lattice
algorithms to PDE applications.

We will provide some background in the introduction, assuming little prior
knowledge from the reader. A similar introduction can be found
in~\cite{CKNS-part1}, but here we focus more on the computational aspect.
Section~\ref{sec:form} provides the mathematical formulation of the
problem and reviews known results including those established
in~\cite{CKNS-part1}. In Section~\ref{sec:fast} we derive a new
formulation of the search criterion that enables the fast construction,
while in Sections~\ref{sec:product}--\ref{sec:SPOD} we develop fast CBC
constructions systematically for special forms of weights. In
Section~\ref{sec:num} we include numerical results for some artificial
choices of POD and SPOD weights. (More comprehensive experiments will
require us to choose weights based on the features of the given practical
problem and therefore go beyond the scope of this paper.)
Section~\ref{sec:conc} concludes the paper with our main theorem,
Theorem~\ref{thm:cost}, which summarizes the computational costs.

\subsection{Quasi-Monte Carlo methods and weighted spaces}

Quasi-Monte Carlo (QMC) methods are equal-weight cubature rules for
approximating high dimensional integrals. Reference books and surveys
include
\cite{Nie92,SJ94,Hic98b,HH02,CN08,Lem09,DP10,LM12,DKS13,LP14,Nuy14}. They
differ from the Monte Carlo methods in that the sample points are chosen
deterministically and more uniformly than random points, promising a
higher rate of convergence than the Monte Carlo root-mean-square error of
$\calO(n^{-1/2})$, with $n$ the number of sample points. There are two
main families of QMC point sets: \emph{digital nets} (\emph{and
sequences}) and \emph{lattice points}, both going back to Russian
number-theorists such as Sobol$'$, Hlawka and Korobov in the late 1950s.
Many QMC point sets and sequences, often collectively referred to as
\emph{low discrepancy sequences}, can achieve nearly first order
convergence rates for integration, while lattice points can achieve even
higher order convergence rates for smooth periodic integrands. However,
the implied constants in the big-$\calO$ bounds depend on the dimension
$d$, i.e., on the number of integration variables. For a long time it was
thought that QMC methods would not be effective in high dimensions,
because most theoretical error bounds for QMC methods contain a $\log(n)$
to a power depending on $d$. But this point of view has dramatically
changed in the last two decades.

The first breakthrough has been to analyze QMC methods in \emph{weighted
spaces} \cite{SW98,SW01,DSWW06}, following \emph{tractability} analysis
\cite{NW08,NW10,NW12}, to establish error bounds that are independent of
dimension. In effect through a choice of \emph{weight parameters} we
identify features of integrands that permit QMC methods to be effective in
very high dimensions. The second milestone has been the development of the
\emph{component-by-component} (CBC) constructions \cite{SR02,SKJ02b} and
\emph{fast CBC algorithms} \cite{NC06a,NC06b,NC06c,Nuy14}, which allow us
to obtain parameters for QMC point sets in thousands of dimensions and
with millions of points that are accompanied by a rigorous error analysis
\cite{Kuo03,DSWW06,DKS13}. The third landmark has been the invention of
\emph{higher order digital nets} for non-periodic integrands
\cite{Dic08,DP10,DKS13}.

Conceptually every function in $d$ dimensions can be expressed as a sum of
$2^d$ orthogonal terms \cite{KSWW10a} where each term depends only on a
subset $\setu$ of the $d$ variables, namely, $x_j$ for $j\in \setu
\subseteq \{1:d\} := \{1,\ldots,d\}$. Weight parameters allow us to
moderate the relative importance of these orthogonal terms. In the fullest
generality \cite{DSWW06} we assign a weight parameter $\gamma_\setu$ to
every subset of the integration variables $\bsx_\setu =
(x_j)_{j\in\setu}$. A small weight $\gamma_\setu$ then means that the
function depends weakly on $\bsx_\setu$. In this full generality there are
$2^d$ weight parameters to specify, which is infeasible in practice except
for very small $d$. So special forms of weights have been considered in
the literature:
\begin{itemize}
\item %
With \emph{product weights} \cite{SW98,SW01}, there is one weight
    parameter $\gamma_j>0$ associated with each coordinate direction
    $x_j$, and the weight for a subset of variables is taken to be the
    product
\[
     \gamma_\setu \,=\, \prod_{j\in\setu} \gamma_j.
\]%
So we have a sequence $\{\gamma_j\}_{j\ge 1}$ and we set
    $\gamma_\emptyset:=1$.
\item %
    With \emph{order dependent weights} \cite{DSWW06}, each
    $\gamma_\setu$ depends only on the cardinality of the set $\setu$,
\[
     \gamma_\setu \,=\, \Gamma_{|\setu|}.
\]
So they are described by a sequence $\{\Gamma_\ell\}_{\ell\ge 0}$,
     with $\gamma_\emptyset := \Gamma_0 :=1$. In addition, they are
     called \emph{finite order weights} of order $q$ if $\gamma_\setu$
    is zero for all subsets $\setu$ with cardinality greater than $q$.
\item %
Recent works on applying QMC for PDEs with random coefficients
    \cite{KSS12,GKNSSS15,KN16,GKNSS18b} have inspired a new form of
    weights called \emph{POD weights}, or \emph{product and order
    dependent weights}, which combine the features of product weights
    and order dependent weights,
\[
     \gamma_\setu \,=\, \Gamma_{|\setu|}\prod_{j\in\setu} \gamma_j.
\]
They are specified by two sequences $\{\gamma_j\}_{j\ge 1}$,
$\{\Gamma_\ell\}_{\ell\ge 0}$, with $\gamma_\emptyset := \Gamma_0
:=1$.
\item %
Further works on PDEs with random coefficients involving higher order
     QMC rules \cite{DKLNS14,KKS} have inspired a more complicated
     form of weights called \emph{SPOD weights}, or
     \emph{smoothness-driven product and order dependent weights},
     which involves an inner structure depending on a smoothness
     degree $\sigma\in\bbN$,
\[
  \gamma_\setu
  \,=\,
  \sum_{\bsnu_\setu \in \{1:\sigma\}^{|\setu|}}
  \Gamma_{|\bsnu_\setu|} \prod_{j \in \setu} \gamma_{j,\nu_j},
\]
where $|\bsnu_\setu| = \sum_{j\in\setu} \nu_j$, with $\gamma_\emptyset
:= \Gamma_0 :=1$. Note there is now a sequence
$\{\Gamma_\ell\}_{\ell>0}$, plus a sequence $\{\gamma_{j,\nu}\}_{j\ge
1}$ for each value of $\nu=1,\ldots,\sigma$.
\end{itemize}

\subsection{Construction of lattice rules for integration}

From here on we focus on the construction of \emph{lattice point sets} for
integrating and approximating periodic functions. Related results exist
for other QMC methods in non-periodic settings; the present work can also
be generalized.

An $n$-point (rank-$1$) lattice rule in $d$ dimensions is specified by an
integer vector $\bsz = (z_1,\ldots,z_d)$ called the \emph{generating
vector}. The resulting point set takes the form
\[
 \Big\{\Big\{\frac{k\bsz}{n}\Big\} \,:\, k\in\bbZ_n\Big\},
\]
where $\bbZ_n := \{0,1,\ldots,n-1\}$, and the inner pair of braces
indicates that we take the fractional part of each component in the
vector. The components of $\bsz$ can be restricted to the range
$\{1,\ldots,n-1\}$, so altogether there are $(n-1)^d$ possible choices for
the generating vector. If an error criterion for the lattice rule can be
evaluated in $\kappa(d,n)$ operations, then it would require
$\calO(n^d\,\kappa(d,n))$ operations to go through all choices to find one
with the smallest error, which is impossible to do when $d$ is large even
if $\kappa(d,n)=\calO(1)$. A CBC construction chooses the components of
the generating vector one at a time, with the previously chosen components
held fixed:
\begin{enumerate}
 \item Set $z_1=1$.
 \item With $z_1$ held fixed, choose $z_2\in \{1,\ldots,n-1\}$ to
     minimize the error criterion in $2$ dimensions.
 \item With $z_1,z_2$ held fixed, choose $z_3\in \{1,\ldots,n-1\}$ to
     minimize the error criterion in $3$ dimensions.
\item With $z_1,z_2,z_3$ held fixed, choose $z_4\in \{1,\ldots,n-1\}$
    to minimize the error criterion in $4$ dimensions.
\item [$\vdots$\,\,\,]
\end{enumerate}
In comparison with the cost of an exhaustive search above, a naive
implementation of the CBC construction requires only
$\calO(d\,n\,\kappa(d,n))$ operations.

For periodic integrands in the Hilbert space whose squared Fourier
coefficient decay at the rate of $\alpha>1$ (corresponding roughly to
$\alpha/2$ available mixed derivatives), it is known that lattice
generating vectors can be obtained by the CBC construction to achieve the
optimal convergence rate of $\calO(n^{-\alpha/2+\delta})$, $\delta>0$,
where the implied constant is independent of $d$ provided that the
(general) weights satisfy a certain summability condition \cite{DSWW06}.

For lattice rules in the periodic setting with product weights, the main
term in the error criterion takes the form \cite{SW01}
\[
  \sum_{k\in\bbZ_n} \prod_{j=1}^d \big(1 + \gamma_j\, \omega(z_j,k)\big),
\]
which can be computed in $\kappa(d,n)=\calO(d\,n)$ operations, so a naive
implementation of the CBC construction requires $\calO(d^2n^2)$
operations. This can be reduced to $\calO(d\,n^2)$ operations by storing
the products during the search. This can be further reduced to
$\calO(d\,n\log(n))$ operations by recognizing that the search involves a
matrix-vector product where the matrix
$[\omega(z,k)]_{z\in\{1,\ldots,n-1\},k\in\bbZ_n}$ can be turned into a
\emph{circulant} matrix, since $\omega(z,k)$ depends only on the value of
$(kz\bmod n)$, so that the fast Fourier transform (FFT) can be used
to speed up the computation \cite{NC06a,NC06b,NC06c,Nuy14}.

With general weights, the main term in the error criterion takes the form
\cite{DSWW06}
\[
  \sum_{k\in\bbZ_n} \sum_{\setu\subseteq\{1:d\}} \gamma_\setu
  \prod_{j\in\setu} \omega(z_j,k),
\]
which requires $\kappa(d,n)=\calO(2^d n)$ operations to evaluate, making
the CBC construction impossible. With order dependent weights
$\gamma_\setu = \Gamma_{|\setu|}$, this main term can be written as
\[
  \sum_{k\in\bbZ_n} \sum_{\ell=0}^d \Gamma_\ell
  \underbrace{\sum_{\setu\subseteq\{1:d\},\,|\setu|=\ell\;} \prod_{j\in\setu} \omega(z_j,k)}_{=:\, P_{d,\ell}(k)},
\]
where the quantities $P_{d,\ell}(k)$ can be stored and computed
recursively. This yields a fast CBC construction with cost
$\calO(d\,n\log(n) + d^2 n)$, where the second term arises due to the need
to update $P_{d,\ell}(k)$ \cite{CKN06}. The algorithm and cost for POD
weights is essentially the same as for order dependent weights
\cite{KSS11}. The algorithm for SPOD weights is more complicated but makes
use of similar ideas and has a cost of $\calO(d\,n\log(n) + d^2
\sigma^2\,n)$ \cite{KKS}.

\subsection{Construction of lattice algorithms for approximation}

Lattice point sets can be used to approximate a periodic function by first
truncating the Fourier series expansion to a finite index set, and then
approximating those Fourier coefficients (which are integrals of the
function against each basis function) by lattice rules. We refer to this
method of approximation as \emph{lattice algorithms}. Existing literature
on lattice-based approximation algorithms has been for the unweighted
setting or for product weights
\cite{LH03,KSW06,ZLH06,KSW08,ZKH09,Kam13,KPV15,SNC16,CKNS16,PV16,BKUV17,KMNN}.

The optimal algorithm for (worst case) $L_2$ approximation based on the
class of \emph{arbitrary linear information} (implying that all Fourier
coefficients can be obtained exactly) can achieve the convergence rate
$\calO(n^{-\alpha/2+\delta})$, $\delta>0$, same as for integration, see
\cite{NSW04}. However, if we restrict to the class of \emph{standard
information} where only function values are available, then it has been an
open problem whether the same rate can be achieved with no dependence of
the error bound on the dimension $d$. A general (non-constructive) result
in \cite{KWW09a} yields the convergence rate
$\calO(n^{-(\alpha/2)[1/(1+1/\alpha)]+\delta})$, $\delta>0$. A very recent
manuscript \cite{KUnew} appears to have solved this open problem.

For algorithms that use function values at lattice points, it has been
proved in~\cite{BKUV17} that the best possible convergence rate is
$\calO(n^{-\alpha/4+\delta})$, $\delta>0$. Hence, unfortunately, lattice
algorithms are not optimal. However, they do have a number of advantages,
including simplicity and efficiency, and therefore can still be
competitive. In~\cite{CKNS-part1} we proved that a lattice generating
vector can be obtained by a CBC algorithm for general weights to achieve
this best possible rate, see Theorem~\ref{thm:final} below.

The fast CBC construction of lattice algorithms for approximation with
non-product weights is much harder than for integration because the error
criterion is rather complicated. This is precisely the goal of this paper.
We show that the overall cost in obtaining a suitable lattice
generating vector is
\begin{align*}
  &\calO\big(d\,n\log(n)\big)  && \mbox{for product weights}, \\
  &\calO\big(d\,n\log(n) + d^2\log(d)\,n\big)  && \mbox{for order dependent weights and POD weights}, \\
  &\calO\big(d\,n\log(n) + d^3\sigma^2\,n\big) && \mbox{for SPOD weights with degree $\sigma \ge 2$},
\end{align*}
plus storage cost as well as pre-computation cost for POD and SPOD
weights, see Theorem~\ref{thm:cost}.

The essential ingredient in managing the computational cost for
non-product weights is to recognize that there are multiple matrix-vector
products involving \emph{Hankel} matrices (i.e., all anti-diagonals are
constant) and therefore the usual $\calO(d^2)$ complexity can be reduced
to $\calO(d\log(d))$ using FFT. This reduction is enough to bring the cost
down to nearly quadratic in $d$ for order dependent weights and POD
weights. Unfortunately, for SPOD weights there are other difficulties
which meant that the best we can do is cubic in $d$. We remark again that,
without special structure of the weights, the computational cost would be
exponentially high in $d$.

In the application of QMC methods to PDE problems, the weights are
typically chosen to minimize (or at least make small) the cubature error
bound, aiming at obtaining the best possible convergence rate while
keeping the error bound independent of the number of stochastic variables
\cite{KSS12,DKLNS14,GKNSSS15,KN16,GKNSS18b,KKS}. It is often the case that
the best theoretical convergence rate can only be obtained by choosing
weights of a more complicated form; this is how POD weights and SPOD
weights arose. For the integration problem, there is no essential
difference between the construction of lattice generating vectors with POD
or SPOD weights \cite{KKS}, but for the approximation problem SPOD weights
are more costly than POD weights as stated above. Thus it is then a
potential trade-off between the cost for the CBC construction and the
theoretical rate of convergence. One may argue that the CBC construction
cost should be considered an offline cost in the PDE application and it is
worth investing in SPOD weights so that the best possible convergence rate
is guaranteed, since every lattice point ultimately involves one
complicated PDE solve.

\section{Problem formulation and review of known results} \label{sec:form}

\subsection{Lattice rules and lattice algorithms}

We consider one-periodic real-valued $L_2$ functions defined on $[0,1]^d$
with absolutely convergent Fourier series
\begin{align*}
  f(\bsx) \,=\, \sum_{\bsh\in\bbZ^d} \hat{f}_\bsh\,e^{2\pi\ri\bsh\cdot\bsx},
  \quad\mbox{with}\quad
  \hat{f}_\bsh \,:=\, \int_{[0,1]^d} f(\bsx)\,e^{-2\pi\ri\bsh\cdot\bsx}\,\rd\bsx.
\end{align*}
where $\hat{f}_\bsh$ are the Fourier coefficients and $\bsh\cdot\bsx =
h_1x_1 + \cdots + h_dx_d$ denotes the usual dot product.

A (rank-$1$) \emph{lattice rule} \cite{SJ94} with $n$ points and
generating vector $\bsz\in \{1,\ldots,n-1\}^d$ approximates the
integral of $f$ by
\begin{align*}
  \int_{[0,1]^d} f(\bsx)\,\rd\bsx
  \quad\approx\quad
  \frac{1}{n} \sum_{k\in\bbZ_n} f\Big(\Big\{\frac{k\bsz}{n}\Big\}\Big),
\end{align*}
where the braces around a vector indicate that we take the fractional part
of each component in the vector.

A \emph{lattice algorithm} \cite{KSW06} with $n$ points and generating
vector $\bsz\in \{1,\ldots,n-1\}^d$, together with an index set
$\calA_d\subset \bbZ^d$, approximates the function $f$ by first truncating
the Fourier series to the finite index set $\calA_d$ and then
approximating the remaining Fourier coefficients by the lattice cubature
rule:
\begin{align} \label{eq:Af}
  A(f)(\bsx) \,:=\, \sum_{\bsh\in\calA_d} \hat{f}_\bsh^a \,e^{2\pi\ri\bsh\cdot\bsx},
  \quad\mbox{with}\quad
  \hat{f}_\bsh^a \,:=\, \frac{1}{n} \sum_{k\in\bbZ_n} f\Big(\Big\{\frac{k\bsz}{n}\Big\}\Big)\,e^{-2\pi\ri k\bsh\cdot\bsz/n}.
\end{align}

\subsection{Function space setting with general weights}

For $\alpha>1$ and nonnegative weight parameters
$\bsgamma=\{\gamma_\setu\}$, we consider the Hilbert space $H_d$ of
one-periodic real-valued $L_2$ functions defined on $[0,1]^d$ with
absolutely convergent Fourier series, with norm defined by
\begin{align*}
  \|f\|_d^2 \,:=\, \sum_{\bsh\in\bbZ^d} \big|\hat{f}_\bsh\big|^2\, r(\bsh),
  \quad\mbox{with}\quad
  r(\bsh) \,:=\, \frac{1}{\gamma_{\supp(\bsh)}}\,\prod_{j\in \supp(\bsh)} |h_j|^\alpha,
\end{align*}
where $\supp(\bsh) := \{1\le j\le d : h_j \ne 0\}$. The parameter $\alpha$
characterizes the rate of decay of the squared Fourier coefficients, so it
is a smoothness parameter. Taking $\gamma_\emptyset :=1$ ensures that the
norm of a constant function in $H_d$ matches its $L_2$ norm.

Some authors refer to this as the \emph{weighted Korobov space}, see
\cite{SW01} for product weights and \cite{DSWW06} for general weights,
while others call this a weighted variant of the \emph{periodic Sobolev
space with dominating mixed smoothness} \cite{BKUV17}.

When $\alpha\geq 2$ is an even integer, it can be shown that
\begin{align*}
 \|f\|_d^2 \,=\,
 \sum_{\setu\subseteq\{1:d\}}\frac{1}{(2\pi)^{\alpha|\setu|}}\frac{1}{\gamma_{\setu}}
 \int_{[0,1]^{|\setu|}}\!\!
 \bigg(\int_{[0,1]^{d-|\setu|}}\bigg(\prod_{j\in\setu}\frac{\partial}{\partial x_j}\bigg)^{\alpha/2}f(\bsx)
 \,\rd\bsx_{\{1:d\}\setminus\setu}\bigg)^2
 \rd\bsx_{\setu}.
\end{align*}
So $f$ has mixed partial derivatives of order $\alpha/2$ in each variable.
Here $\bsx_\setu = (x_j)_{j\in\setu}$.

\subsection{Approximation}

For the approximation problem we can follow \cite{KSW06,KSW08} to define
the index set $\calA_d$ with some parameter $M>0$ by
\begin{align} \label{eq:AdM}
  \calA_d(M) \,:=\, \big\{\bsh\in\bbZ^d : r(\bsh) \le M \big\},
\end{align}
with the difference being that here we have general weights determining
the values of $r(\bsh)$, while \cite{KSW06,KSW08} considered only product
weights. From \cite{KSW06,CKNS-part1} we have the \emph{worst case $L_2$
approximation} error bound
\begin{align*} 
  e^{\rm wor\mbox{-}app}_{n,d,M}(\bsz)
  \,:=\,& \sup_{f\in H_d,\,\|f\|_d\le 1} \|f - A(f)\|_{L_2} \nonumber\\
  \,\le\,& \bigg(\frac{1}{M} + E_d(\bsz)\bigg)^{1/2}
  \,\le\, \bigg(\frac{1}{M} + M\,S_d(\bsz)\bigg)^{1/2},
\end{align*}
with (in the last step using $r(\bsh)\le M$ for $\bsh\in\calA_d(M)$)
\begin{align*}
  E_d(\bsz)\,:=\, \sum_{\bsh\in\calA_d(M)} \sum_{\satop{\bsell\in\bbZ^d\setminus\{\bszero\}}{\bsell\cdot\bsz\equiv_n 0}}
  \frac{1}{r(\bsh+\bsell)}
  \quad\mbox{and}\quad
  S_d(\bsz)\,:=\, \sum_{\bsh\in\bbZ^d} \frac{1}{r(\bsh)}
  \sum_{\satop{\bsell\in\bbZ^d\setminus\{\bszero\}}{\bsell\cdot\bsz\equiv_n 0}} \frac{1}{r(\bsh+\bsell)}.
\end{align*}
The quantity $E_d(\bsz)$ was analyzed in \cite{KSW06,KSW08}, while a
variant of $S_d(\bsz)$ first appeared in the context of a
Lattice-Nystr\"om method for Fredholm integral equations of the second
kind \cite{DKKS07}. The advantage of working with $S_d(\bsz)$ instead of
$E_d(\bsz)$ is that there is no dependence on the index set $\calA_d(M)$.
This leads to an easier error analysis and a lower cost in finding
suitable generating vectors. The initial approximation error is given by
$e^{\rm wor\mbox{-}app}_{0,d} := \sup_{f\in H_d,\,\|f\|_d\le 1}
\|f\|_{L_2} = \max_{\setu\subseteq\{1:d\}} \gamma_\setu^{1/2}$.

\subsection{Collection of results from \cite{CKNS-part1}} \label{sec:paper1}

We proved in \cite{CKNS-part1} that a generating vector~$\bsz$ can be
constructed by a CBC algorithm based on $S_d(\bsz)$ with general weights
as the search criterion, so that the worst case $L_2$ approximation error
achieves the best possible rate for lattice algorithms. Our goal in
this paper is to develop fast CBC algorithms for special forms of
weights. Here we include some necessary results from \cite{CKNS-part1}.

The CBC algorithm works with a \emph{dimension-wise decomposition} of the
error criterion $S_d(\bsz)$ as shown in \eqref{eq:Sd-decomp} below.
Compared with most CBC algorithms, the difficulty for the error analysis
in \cite{CKNS-part1}, as well as the construction here, is that each step
relies on the entire weight sequence, i.e., ``future'' weights come into
play as can be seen from the expression \eqref{eq:Tds}. Thus the target
final dimension $d$ must be fixed at the start of the CBC algorithm, and
the resulting lattice generating vector is not extensible in $d$. Similar
strategies have been used previously in \cite{NSW17,ELN18}.

\begin{lemma}
Let $d\ge 1$ be fixed and a sequence of weights
$\{\gamma_\setu\}_{\setu\subseteq\{1:d\}}$ be given. We can write
\begin{align} \label{eq:Sd-decomp}
  S_d(\bsz)
  \,=\, \sum_{s=1}^d T_{d,s} \big(z_1,\ldots,z_s\big),
\end{align}
where, for each $s=1,2,\ldots,d$,
\begin{align} \label{eq:Tds}
  T_{d,s} \big(z_1,\ldots,z_s\big)
  \,:=\, \sum_{\setw\subseteq\{s+1:d\}}
  [2\zeta(2\alpha)]^{|\setw|}\, \theta_s \big(z_1,\ldots,z_s; \{\gamma_{\setu\cup\setw}\}_{\setu\subseteq\{1:s\}}\big),
\end{align}
\begin{align} \label{eq:theta}
  \theta_s \big(z_1,\ldots,z_s;\{\beta_\setu\}_{\setu\subseteq\{1:s\}}\big)
  \,:=\, \sum_{\bsh\in\bbZ^s} \sum_{\satop{\bsell\in\bbZ^s,\;\ell_s\ne 0}{\bsell\cdot (z_1,\ldots,z_s)\equiv_n 0}}
  \frac{\beta_{\supp(\bsh)}}{r'(\bsh)}
  \frac{\beta_{\supp(\bsh+\bsell)}}{r'(\bsh+\bsell)},
\end{align}
with $r'(\bsh) := \prod_{j\in\supp(\bsh)} |h_j|^\alpha$.
\end{lemma}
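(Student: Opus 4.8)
The plan is to start from
\[
  S_d(\bsz) \,=\, \sum_{\bsh\in\bbZ^d} \frac{1}{r(\bsh)} \sum_{\satop{\bsell\in\bbZ^d\setminus\{\bszero\}}{\bsell\cdot\bsz\equiv_n 0}} \frac{1}{r(\bsh+\bsell)},
\]
and first rewrite the reciprocal weights using $r(\bsh) = r'(\bsh)/\gamma_{\supp(\bsh)}$, so that $1/r(\bsh) = \gamma_{\supp(\bsh)}/r'(\bsh)$. I would then partition the dual‑lattice vectors $\bsell\in\bbZ^d\setminus\{\bszero\}$ according to the \emph{largest} index $s\in\{1,\ldots,d\}$ with $\ell_s\ne 0$ (so that $\ell_{s+1}=\cdots=\ell_d=0$); this $s$ is well defined and unique precisely because $\bsell\ne\bszero$. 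For a dual vector of this type the congruence $\bsell\cdot\bsz\equiv_n 0$ reduces to $(\ell_1,\ldots,\ell_s)\cdot(z_1,\ldots,z_s)\equiv_n 0$, which is exactly what forces the $s$-th summand to depend only on $z_1,\ldots,z_s$, matching the statement \eqref{eq:Tds}.

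Next I would split each $\bsh=(\bsh_{\{1:s\}},\bsh_{\{s+1:d\}})$ with $\bsh_{\{1:s\}}\in\bbZ^s$, $\bsh_{\{s+1:d\}}\in\bbZ^{d-s}$, and observe that, since $\bsell$ has no nonzero entry beyond position $s$, the ``tail'' block $\bsh_{\{s+1:d\}}$ is common to $\bsh$ and to $\bsh+\bsell$. Writing $\setu:=\supp(\bsh_{\{1:s\}})\subseteq\{1:s\}$ and $\setw:=\supp(\bsh_{\{s+1:d\}})\subseteq\{s+1:d\}$, the supports factorise as $\supp(\bsh)=\setu\cup\setw$ and $\supp(\bsh+\bsell)=\supp(\bsh_{\{1:s\}}+\bsell_{\{1:s\}})\cup\setw$, while $r'$ factorises multiplicatively over coordinates. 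Grouping the tail by its support $\setw$ and summing it out yields
\[
  \sum_{\satop{\bsh_{\{s+1:d\}}\in\bbZ^{d-s}}{\supp(\bsh_{\{s+1:d\}})=\setw}} \frac{1}{r'(\bsh_{\{s+1:d\}})^2} \,=\, \prod_{j\in\setw}\;\sum_{0\ne h\in\bbZ}\frac{1}{|h|^{2\alpha}} \,=\, [2\zeta(2\alpha)]^{|\setw|},
\]
where the exponent $2$ on $r'$ appears because the common tail block sits in the denominator of both $1/r'(\bsh)$ and $1/r'(\bsh+\bsell)$; the rearrangement of these (nonnegative) sums is justified since $\alpha>1$ makes $\zeta(2\alpha)$ finite and $S_d(\bsz)$ itself is finite.

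It then remains to recognise that, after pulling out the tail, the surviving sum over $\bsh_{\{1:s\}}\in\bbZ^s$ and over the admissible $\bsell_{\{1:s\}}\in\bbZ^s$ with $\ell_s\ne 0$ and $\bsell_{\{1:s\}}\cdot(z_1,\ldots,z_s)\equiv_n0$ is precisely $\theta_s\big(z_1,\ldots,z_s;\{\beta_\setu\}_{\setu\subseteq\{1:s\}}\big)$ with the choice $\beta_\setu:=\gamma_{\setu\cup\setw}$, using $\supp(\bsh)\cup\setw=\supp(\bsh_{\{1:s\}})\cup\setw$ and the analogous identity for $\bsh+\bsell$. Summing over $\setw\subseteq\{s+1:d\}$ reconstitutes $T_{d,s}(z_1,\ldots,z_s)$ as in \eqref{eq:Tds}, and summing over $s=1,\ldots,d$ gives \eqref{eq:Sd-decomp}. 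The only genuinely delicate point is the bookkeeping in the middle step: cleanly separating both $\bsh$ and $\bsell$ into their ``head'' ($\{1:s\}$) and ``tail'' ($\{s+1:d\}$) parts, tracking that $\bsell$ leaves the tail untouched, and isolating the doubled occurrence of the tail weights that produces the $[2\zeta(2\alpha)]^{|\setw|}$ factor; the remaining manipulations are routine interchanges of absolutely convergent sums.
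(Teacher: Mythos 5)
Your argument is correct and is essentially the standard derivation (the paper itself only quotes this lemma from the companion work, but your route --- partitioning the dual vectors $\bsell\ne\bszero$ by the largest index $s$ with $\ell_s\ne 0$, splitting $\bsh$ into head and tail, and summing out the common tail to produce the factor $[2\zeta(2\alpha)]^{|\setw|}$ --- is exactly the intended one). All terms are nonnegative, so the interchanges of summation you invoke are justified by Tonelli, and the identification of the head sum with $\theta_s$ for $\beta_\setu=\gamma_{\setu\cup\setw}$ is accurate.
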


\begin{algorithm} \label{alg}
Given $n\ge 2$, a fixed $d\ge 1$, and a sequence of weights
$\{\gamma_\setu\}_{\setu\subseteq\{1:d\}}$, the generating vector $\bsz^*
= (z_1^*,\ldots,z_d^*)$ is constructed as follows: for each $s = 1,
\ldots,d$, with $z_1^*,\ldots,z_{s-1}^*$ fixed, choose $z_s \in
\{1,\ldots,n-1\}$ to minimize the quantity $T_{d,s}
\big(z_1^*,\ldots,z_{s-1}^*,z_s\big)$ given by \eqref{eq:Tds}.
\end{algorithm}

\begin{theorem} \label{thm:main}
Let $n$ be prime. For fixed $d\ge 1$ and a given sequence of weights
$\{\gamma_\setu\}_{\setu\subseteq\{1:d\}}$, a generating vector $\bsz$
obtained from the CBC construction following Algorithm~\ref{alg} satisfies
for all $\lambda\in (\tfrac{1}{\alpha},1]$,
\begin{align} \label{eq:final}
  S_d(\bsz) \,\le\,
  \bigg[ \frac{\tau}{n} \bigg(
  \sum_{\emptyset\ne\setu\subseteq\{1:d\}} |\setu|\,\gamma_{\setu}^\lambda\, [2\zeta(\alpha\lambda)]^{|\setu|}\bigg)
  \bigg(\sum_{\setu\subseteq\{1:d\}} \gamma_\setu^\lambda\, [2\zeta(\alpha\lambda)]^{|\setu|}\bigg) \bigg]^{1/\lambda},
\end{align}
where $\tau := \max(6,2.5+2^{2\alpha\lambda+1})$. Furthermore, if the
weights are such that there exists a constant $\xi\ge 1$
\textnormal{(}which may depend on $\lambda$\textnormal{)} such that
\begin{align} \label{eq:decay}
  \gamma_{\setu\cup\setw}^\lambda \le \xi\,\frac{\gamma_\setu^\lambda}{[2\zeta(\alpha\lambda)]^{|\setw|}}
  \quad\mbox{for all}\quad \setu\subseteq\{1:s\},\; \setw\subseteq\{s+1:d\},\; s\ge 1,\; d\ge 1,
\end{align}
then \eqref{eq:final} holds with $\tau$ replaced by $\tau\,\xi$ and with
the $|\setu|$ factor inside the first sum replaced by $1$.
\end{theorem}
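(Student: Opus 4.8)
The plan is a component-by-component averaging argument, but carried out entirely at the level of the ``future-weighted'' quantities $\theta_s$ of \eqref{eq:theta}, together with a decoupling step that turns the bookkeeping over the future index sets $\setw$ into the product form of \eqref{eq:final}.

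I would begin from the exact identity \eqref{eq:Sd-decomp} and the subadditivity of $t\mapsto t^\lambda$ on $[0,\infty)$ for $0<\lambda\le 1$, so that $S_d(\bsz)^\lambda\le\sum_{s=1}^d T_{d,s}(z_1,\ldots,z_s)^\lambda$; it then suffices to bound each $T_{d,s}^\lambda$. Since Algorithm~\ref{alg} picks $z_s$ to minimise $T_{d,s}(z_1^*,\ldots,z_{s-1}^*,\cdot)$ and a minimum is at most an average, $T_{d,s}(z_1^*,\ldots,z_s^*)^\lambda\le\frac{1}{n-1}\sum_{z=1}^{n-1}T_{d,s}(z_1^*,\ldots,z_{s-1}^*,z)^\lambda$. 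Expanding $T_{d,s}$ via \eqref{eq:Tds}--\eqref{eq:theta} as a single sum of nonnegative terms indexed by $(\setw,\bsh,\bsell)$, applying subadditivity of $t\mapsto t^\lambda$ termwise (turning $[2\zeta(2\alpha)]^{|\setw|}$ into $[2\zeta(2\alpha)]^{\lambda|\setw|}$ and each $r'(\cdot)$ into $r'(\cdot)^\lambda$), and averaging over $z$, primality of $n$ enters: for fixed $\bsell$ with $\ell_s\ne0$ the congruence $\bsell\cdot(z_1^*,\ldots,z_{s-1}^*,z)\equiv_n0$ has at most one solution $z\in\{1,\ldots,n-1\}$ when $\ell_s\not\equiv_n0$, whereas $\ell_s\equiv_n0$ forces $|\ell_s|\ge n$ and makes the congruence independent of $z$. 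In the first part the $z$-average costs $\le 1/(n-1)$, after which the sum over $\bsell$ is unconstrained and factorises (substitute $\bsk=\bsh+\bsell$); in the second part one drops the lattice constraint on $\ell_1,\ldots,\ell_{s-1}$ entirely, the gain coming from $|\ell_s|\ge n$ in the $s$-th coordinate. In both parts one uses $\sum_{h\ne0}|h|^{-\alpha\lambda}=2\zeta(\alpha\lambda)$ (finite exactly because $\alpha\lambda>1$, which is where $\lambda>1/\alpha$ is used) and the fact that $\ell_s\ne0$ forces $s\in\supp(\bsh)\cup\supp(\bsh+\bsell)$, so that one of the resulting index sets always contains $s$. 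None of these bounds depends on $z_1^*,\ldots,z_{s-1}^*$, so no induction on $s$ is needed, and one arrives at
\[
  \frac{1}{n-1}\sum_{z=1}^{n-1}T_{d,s}(z_1^*,\ldots,z_{s-1}^*,z)^\lambda
  \,\le\,\frac{\tau}{n}\sum_{\setw\subseteq\{s+1:d\}}[2\zeta(2\alpha)]^{\lambda|\setw|}\,A^{(s)}_\setw\,B^{(s)}_\setw,
\]
with $A^{(s)}_\setw:=\sum_{\setu\subseteq\{1:s\},\,s\in\setu}\gamma_{\setu\cup\setw}^\lambda[2\zeta(\alpha\lambda)]^{|\setu|}$ and $B^{(s)}_\setw:=\sum_{\setu\subseteq\{1:s\}}\gamma_{\setu\cup\setw}^\lambda[2\zeta(\alpha\lambda)]^{|\setu|}$.

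To deduce \eqref{eq:final} I would bound $[2\zeta(2\alpha)]^{\lambda|\setw|}\le[2\zeta(\alpha\lambda)]^{2|\setw|}$ (valid since $2\alpha\ge\alpha\lambda$ and $2\zeta(\alpha\lambda)\ge1$) and give one factor $[2\zeta(\alpha\lambda)]^{|\setw|}$ to each of $A^{(s)}_\setw,B^{(s)}_\setw$; re-indexing by $\setv:=\setu\cup\setw\subseteq\{1:d\}$ turns $\sum_{\setw}[2\zeta(\alpha\lambda)]^{|\setw|}A^{(s)}_\setw$ into $\sum_{\setv\ni s}\gamma_\setv^\lambda[2\zeta(\alpha\lambda)]^{|\setv|}$ (and similarly for $B$), and $\sum_\setw a_\setw b_\setw\le(\sum_\setw a_\setw)(\sum_\setw b_\setw)$ decouples the $\setw$-sum, giving $T_{d,s}(\bsz^*)^\lambda\le\frac{\tau}{n}\big(\sum_{\setv\ni s}\gamma_\setv^\lambda[2\zeta(\alpha\lambda)]^{|\setv|}\big)\big(\sum_{\setv\subseteq\{1:d\}}\gamma_\setv^\lambda[2\zeta(\alpha\lambda)]^{|\setv|}\big)$ (a factor $2$ from $s\in\supp(\bsh)\cup\supp(\bsh+\bsell)$ absorbed into $\tau$). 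Summing over $s$ with $\sum_{s=1}^d\sum_{\setv\ni s}=\sum_\setv|\setv|$ and raising to the power $1/\lambda$ yields \eqref{eq:final}. Under \eqref{eq:decay} I would instead keep only $[2\zeta(2\alpha)]^{\lambda|\setw|}\le[2\zeta(\alpha\lambda)]^{|\setw|}$ and spend it on $B^{(s)}_\setw$ (so $\sum_\setw B^{(s)}_\setw\le\sum_{\setv\subseteq\{1:d\}}\gamma_\setv^\lambda[2\zeta(\alpha\lambda)]^{|\setv|}$), while using \eqref{eq:decay} as $\gamma_{\setu\cup\setw}^\lambda\le\xi\gamma_\setu^\lambda/[2\zeta(\alpha\lambda)]^{|\setw|}$ to cancel the $\setw$-dependence of $A^{(s)}_\setw$; this gives $T_{d,s}(\bsz^*)^\lambda\le\frac{\tau\xi}{n}\big(\sum_{\setu\subseteq\{1:s\},\,s\in\setu}\gamma_\setu^\lambda[2\zeta(\alpha\lambda)]^{|\setu|}\big)\big(\sum_{\setv\subseteq\{1:d\}}\gamma_\setv^\lambda[2\zeta(\alpha\lambda)]^{|\setv|}\big)$, and since $\setu\subseteq\{1:s\}$ together with $s\in\setu$ forces $s=\max\setu$, summation over $s$ now counts each nonempty $\setu$ exactly once, replacing the factor $|\setu|$ by $1$.

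The step I expect to be the main obstacle is the estimate of the $s$-th coordinate under the lattice constraint, and in particular pinning down the explicit constant $\tau=\max(6,2.5+2^{2\alpha\lambda+1})$. When $\ell_s\not\equiv_n0$ the congruence costs only $1/(n-1)\le 2/n$ per term, but when $\ell_s\equiv_n0$ one needs a tail estimate of $\sum_{\ell\ne 0,\,n\mid\ell}|h+\ell|^{-\alpha\lambda}$: since $\ell_s\ne0$ and $n\mid\ell_s$ force $\max(|h_s|,|h_s+\ell_s|)\ge n/2$, one splits off the at-most-one least-absolute-value representative of the residue class (which contributes at most $1$, but is paired with the then-small factor $|h_s|^{-\alpha\lambda}\le(n/2)^{-\alpha\lambda}$) from a geometric tail of size $\calO(2^{\alpha\lambda}n^{-\alpha\lambda})$, and because the $s$-th coordinate sits inside \emph{both} $r'(\bsh)$ and $r'(\bsh+\bsell)$ the estimate is effectively applied to both, which is the source of the exponent $2\alpha\lambda$. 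A secondary subtlety — the point already flagged above, that the $T_{d,s}$ depend on the ``future'' weights $\{\gamma_{\setu\cup\setw}\}$ — is that all the index-set bookkeeping must be carried out with these future weights; it is precisely the impossibility of extending the $A^{(s)}_\setw$ factor to a sum over the full index set $\{1:d\}$ \emph{without} the decay hypothesis \eqref{eq:decay} that forces the extra factor $|\setu|$ in the general bound.
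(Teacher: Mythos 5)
Your proposal follows what is essentially the intended argument: the paper states Theorem~\ref{thm:main} without proof (it is quoted from \cite{CKNS-part1}), and the proof there is exactly your route --- the dimension-wise decomposition \eqref{eq:Sd-decomp} plus ``minimum is at most average'' over $z_s$, termwise Jensen with $t\mapsto t^\lambda$, the prime-$n$ case split on whether $n\mid\ell_s$, the substitution $\bsq=\bsh+\bsell$ with factorisation via $\sum_{h\ne0}|h|^{-\alpha\lambda}=2\zeta(\alpha\lambda)$, and the re-indexing $\setv=\setu\cup\setw$ whose sum over $s$ produces the $|\setu|$ factor (or, under \eqref{eq:decay}, the $s=\max\setu$ counting that removes it). The only piece you leave unexecuted is the explicit evaluation of the $s$-th coordinate sum over $q_s\equiv_n h_s$, $q_s\ne h_s$ that yields the stated value of $\tau$; you have correctly located this computation and described its mechanism, so it is bookkeeping rather than a gap.
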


\begin{theorem} \label{thm:final} Given $d\ge 1$, $\alpha>1$ and
weights $\{\gamma_\setu\}_{\setu\subset\bbN}$, let $n$ be prime and $M>0$.
The lattice algorithm \eqref{eq:Af}, with index set \eqref{eq:AdM} and
generating vector $\bsz$ obtained from the CBC construction following
Algorithm~\ref{alg}, satisfies for all $\lambda\in (\frac{1}{\alpha},1]$,
\begin{align*}
  e^{\rm wor\mbox{-}app}_{n,d,M}(\bsz)
  &\,\le\, \bigg(\frac{1}{M} + M\, S_d(\bsz) \bigg)^{1/2} \\
  &\,\le\, \Bigg(\frac{1}{M} + M \bigg[\frac{\tau}{n} \bigg(
  \sum_{\emptyset\ne\setu\subseteq\{1:d\}} |\setu|\,\gamma_{\setu}^\lambda\, [2\zeta(\alpha\lambda)]^{|\setu|}\bigg)
  \bigg(\sum_{\setu\subseteq\{1:d\}} \gamma_\setu^\lambda\, [2\zeta(\alpha\lambda)]^{|\setu|}\bigg)\bigg]^{1/\lambda}
 \Bigg)^{1/2},
\end{align*}
where $\tau = \max(6,2.5+2^{2\alpha\lambda+1})$.
Taking $M = n^{1/(2\lambda)}$, we obtain a simplified upper bound
\begin{align*}
  e^{\rm wor\mbox{-}app}_{n,d,M}(\bsz)
  \,\le\, \frac{\sqrt{2}\,\tau^{1/(2\lambda)}}{n^{1/(4\lambda)}} \bigg(
  \sum_{\setu\subseteq\{1:d\}} \max(|\setu|,1)\,\gamma_{\setu}^\lambda\, [2\zeta(\alpha\lambda)]^{|\setu|}\bigg)^{1/\lambda}.
\end{align*}
Hence
\[
  e^{\rm wor\mbox{-}app}_{n,d,M}(\bsz) \,=\, \calO(n^{-\alpha/4 + \delta}), \quad\delta>0,
\]
where the implied constant is independent of $d$ provided that
\begin{equation} \label{eq:condition}
  \sum_{\setu\subset\bbN,\,|\setu|<\infty} \max(|\setu|,1)\,\gamma_{\setu}^{\frac{1}{\alpha-4\delta}}\,
  [2\zeta\big(\tfrac{\alpha}{\alpha-4\delta}\big)]^{|\setu|}
  \,<\, \infty.
\end{equation}
If the weights satisfy \eqref{eq:decay} for some $\xi\ge 1$ then the
$|\setu|$ and $\max(|\setu|,1)$ factors inside the sums can be replaced by
$1$ as long as $\tau$ is replaced by $\tau\,\xi$.
\end{theorem}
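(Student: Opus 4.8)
The plan is to assemble the bound from three ingredients already in hand: the worst-case error estimate $e^{\rm wor\mbox{-}app}_{n,d,M}(\bsz)\le(1/M+M\,S_d(\bsz))^{1/2}$ recalled from \cite{KSW06,CKNS-part1}, the bound \eqref{eq:final} on $S_d(\bsz)$ supplied by Theorem~\ref{thm:main} (applicable since $\bsz$ is produced by Algorithm~\ref{alg}), and an elementary optimization over $M$ and $\lambda$. The first displayed chain of inequalities is then immediate: the first ``$\le$'' is the cited worst-case estimate together with $E_d(\bsz)\le M\,S_d(\bsz)$ (valid because $r(\bsh)\le M$ on $\calA_d(M)$), and the second ``$\le$'' is the substitution of \eqref{eq:final} for $S_d(\bsz)$, which holds for every $\lambda\in(\tfrac1\alpha,1]$.

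For the simplified bound I would put $M=n^{1/(2\lambda)}$, so that $1/M=n^{-1/(2\lambda)}$ and $M\cdot[\tau\,(\cdots)(\cdots)/n]^{1/\lambda}=n^{-1/(2\lambda)}[\tau\,(\cdots)(\cdots)]^{1/\lambda}$, and both summands share the factor $n^{-1/(2\lambda)}$. Abbreviating $A:=\sum_{\emptyset\ne\setu\subseteq\{1:d\}}|\setu|\,\gamma_\setu^\lambda[2\zeta(\alpha\lambda)]^{|\setu|}$, $B:=\sum_{\setu\subseteq\{1:d\}}\gamma_\setu^\lambda[2\zeta(\alpha\lambda)]^{|\setu|}$ and $C:=\sum_{\setu\subseteq\{1:d\}}\max(|\setu|,1)\,\gamma_\setu^\lambda[2\zeta(\alpha\lambda)]^{|\setu|}$, the empty-set term shows $C\ge1$, while a termwise comparison gives $A\le C$ and $B\le C$; since also $\tau\ge6$ and $1/\lambda\ge1$, one has $1+[\tau AB]^{1/\lambda}\le 1+\tau^{1/\lambda}C^{2/\lambda}\le 2\,\tau^{1/\lambda}C^{2/\lambda}$. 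Multiplying by $n^{-1/(2\lambda)}$ and taking square roots yields exactly the claimed bound, with $C^{1/\lambda}$ as the weight-dependent factor.

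For the convergence rate, fix $\delta>0$; since $n\ge2$ and the exponent $-\alpha/4+\delta$ is increasing in $\delta$, it suffices to handle $\delta\le(\alpha-1)/4$, for which $\lambda:=1/(\alpha-4\delta)$ lies in $(\tfrac1\alpha,1]$ and satisfies $1/(4\lambda)=\alpha/4-\delta$. The simplified bound then reads $e^{\rm wor\mbox{-}app}_{n,d,M}(\bsz)\le\sqrt2\,\tau^{1/(2\lambda)}\,C^{1/\lambda}\,n^{-\alpha/4+\delta}$, and since $\alpha\lambda=\alpha/(\alpha-4\delta)$ the finite sum $C$ is dominated by the infinite sum in \eqref{eq:condition}; that hypothesis therefore makes the implied constant finite and independent of $d$. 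Finally, if the weights obey \eqref{eq:decay} I invoke the closing sentence of Theorem~\ref{thm:main} to replace $\tau$ by $\tau\xi$ and the factor $|\setu|$ in $A$ by $1$; then $A\le B$, and repeating the argument with $C$ replaced by $B$ gives the stated variants in which $\max(|\setu|,1)$ becomes $1$ everywhere and $\tau$ becomes $\tau\xi$.

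The whole argument is essentially bookkeeping. The only points requiring a little care are checking that the chosen $\lambda$ is admissible for the given $\delta$, and the elementary inequalities $A,B\le C$, $C,\tau\ge1$, $1/\lambda\ge1$ that collapse $1+[\tau AB]^{1/\lambda}$ into $2\,\tau^{1/\lambda}C^{2/\lambda}$; I do not foresee a genuine obstacle.
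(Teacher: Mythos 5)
Your proposal is correct and follows exactly the intended route: the theorem is just the worst-case bound $e^{\rm wor\mbox{-}app}_{n,d,M}(\bsz)\le(1/M+M\,S_d(\bsz))^{1/2}$ combined with \eqref{eq:final} from Theorem~\ref{thm:main}, the choice $M=n^{1/(2\lambda)}$, and the elementary estimates $A,B\le C$, $C,\tau\ge1$ that collapse $1+[\tau AB]^{1/\lambda}$ into $2\,\tau^{1/\lambda}C^{2/\lambda}$; the choice $\lambda=1/(\alpha-4\delta)$ and the final remark under \eqref{eq:decay} are handled as the authors intend. No gaps.
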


We can apply the bound $\max(|\setu|,1)\le (e^{1/e})^{|\setu|}$ in
\eqref{eq:condition} to obtain a sufficient condition
$\sum_{\setu\subset\bbN,\,|\setu|<\infty}
\gamma_{\setu}^{\frac{1}{\alpha-4\delta}}\,
  [2e^{1/e}\zeta\big(\tfrac{\alpha}{\alpha-4\delta}\big)]^{|\setu|}
  \,<\, \infty$.

\section{New formulation of the search criterion} \label{sec:fast}

To be able to evaluate efficiently the quantity $T_{d,s}(z_1,\ldots,z_s)$
in \eqref{eq:Tds} which is needed in Algorithm~\ref{alg}, we proceed to
derive an alternative formulation which allows us to carry out the search
using two matrix-vector multiplications. Note that we do not require $n$
to be prime in Algorithm~\ref{alg} nor any of the subsequent derivations
in this paper. (Restricting $n$ to primes is used to simplify the error
analysis in \cite{CKNS-part1}; it should be possible to generalize the
results to composite $n$ with a more technical proof and modified
constants.)

\begin{lemma} \label{lem:semi}
We can rewrite the search criterion \eqref{eq:Tds} as
\begin{align*}
  T_{d,s}(z_1,\ldots,z_s)
  \,=\, \frac{1}{n} \sum_{k\in\bbZ_n} \psi(z_s,k)\,V_{d,s}(k) + \frac{2}{n} \sum_{k\in\bbZ_n} \omega(z_s,k)\,W_{d,s}(k),
\end{align*}
where, for $z\in \{1,\ldots,n-1\}$ and $k\in\bbZ_n$,
\begin{align} \label{eq:omega}
 \omega(z,k) &\,:=\,
 \sum_{h\in\bbZ\setminus\{0\}} \frac{e^{2\pi\ri k h z/n}}{|h|^\alpha},\qquad
 \psi(z,k) \,:=\, [\omega(z,k)]^2 - 2\zeta(2\alpha),
\end{align}
and
\begin{align*}
  V_{d,s}(k)
  &\,:=\,
  \sum_{\setw\subseteq\{s+1:d\}} [2\zeta(2\alpha)]^{|\setw|}\,
  \bigg(\sum_{\setu\subseteq\{1:s-1\}} \gamma_{\setu\cup\{s\}\cup\setw} \prod_{j\in\setu} \omega(z_j,k)\bigg)^2, \\
  W_{d,s}(k)
  &\,:=\, \sum_{\setw\subseteq\{s+1:d\}} [2\zeta(2\alpha)]^{|\setw|}
  \bigg(\!\sum_{\setu\subseteq\{1:s-1\}} \!\!\!\!\!\! \gamma_{\setu\cup\{s\}\cup\setw} \prod_{j\in\setu} \omega(z_j,k)\bigg)
  \bigg(\!\sum_{\setu\subseteq\{1:s-1\}} \!\!\!\!\!\!\gamma_{\setu\cup\setw} \prod_{j\in\setu} \omega(z_j,k)\bigg).
\end{align*}
Note that both $V_{d,s}(k)$ and $W_{d,s}(k)$ depend on
$z_1,\ldots,z_{s-1}$.
\end{lemma}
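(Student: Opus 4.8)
The plan is to apply the standard character-sum / generating-function manipulation used for lattice-rule search criteria, carefully tracking the role of the last coordinate. First I would rewrite $\theta_s$ in \eqref{eq:theta} by the substitution $\bsm := \bsh+\bsell$, turning it into a double sum over $\bsh,\bsm\in\bbZ^s$ subject to $m_s\ne h_s$ and $\bsm\cdot(z_1,\ldots,z_s)\equiv_n\bsh\cdot(z_1,\ldots,z_s)$. Because $\alpha>1$, the coefficient $\beta_{\supp(\bsh)}/r'(\bsh)$ is absolutely summable over $\bbZ^s$ — the sum factorizes over coordinates into copies of $\sum_{h\ne0}|h|^{-\alpha}=2\zeta(\alpha)$, times a finite sum over supports — so every rearrangement below is legitimate. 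I would then replace the congruence by the identity $\frac1n\sum_{k\in\bbZ_n}e^{2\pi\ri k(\bsm-\bsh)\cdot\bsz/n}=\mathbf{1}[\bsm\cdot\bsz\equiv_n\bsh\cdot\bsz]$, and handle the constraint $m_s\ne h_s$ by writing (all pairs) minus (the diagonal pairs $m_s=h_s$).

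Next I would introduce the full generating function $F(k):=\sum_{\bsh\in\bbZ^s}\frac{\beta_{\supp(\bsh)}}{r'(\bsh)}e^{-2\pi\ri k\bsh\cdot\bsz/n}$, which is real by the substitution $\bsh\mapsto-\bsh$, so the ``all pairs'' contribution is $|F(k)|^2$. For the diagonal pairs the phase collapses to $(\bsm'-\bsh')\cdot\bsz'$ with $\bsz':=(z_1,\ldots,z_{s-1})$, and splitting into $h_s=m_s=0$ and $h_s=m_s\ne0$ (using $\sum_{t\ne0}|t|^{-2\alpha}=2\zeta(2\alpha)$) gives $|G(k)|^2+2\zeta(2\alpha)|\widetilde G(k)|^2$, where $G(k):=\sum_{\bsh'\in\bbZ^{s-1}}\frac{\beta_{\supp(\bsh')}}{r'(\bsh')}e^{-2\pi\ri k\bsh'\cdot\bsz'/n}$ and $\widetilde G(k):=\sum_{\bsh'\in\bbZ^{s-1}}\frac{\beta_{\supp(\bsh')\cup\{s\}}}{r'(\bsh')}e^{-2\pi\ri k\bsh'\cdot\bsz'/n}$, both real. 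This yields $\theta_s=\frac1n\sum_{k\in\bbZ_n}\big(|F(k)|^2-|G(k)|^2-2\zeta(2\alpha)|\widetilde G(k)|^2\big)$.

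The crucial step is to split $F$ itself according to whether its last coordinate vanishes: the $h_s=0$ part is exactly $G(k)$, while the $h_s\ne0$ part factors as $\big(\sum_{t\ne0}|t|^{-\alpha}e^{-2\pi\ri ktz_s/n}\big)\widetilde G(k)=\omega(z_s,k)\widetilde G(k)$ since $\omega(z_s,k)$ is real. Hence $F(k)=G(k)+\omega(z_s,k)\widetilde G(k)$, and expanding $|F(k)|^2$ and cancelling makes $|F(k)|^2-|G(k)|^2-2\zeta(2\alpha)|\widetilde G(k)|^2$ equal to $\psi(z_s,k)\widetilde G(k)^2+2\omega(z_s,k)G(k)\widetilde G(k)$, precisely because $\psi(z_s,k)=\omega(z_s,k)^2-2\zeta(2\alpha)$. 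Finally I would group the sums defining $G$ and $\widetilde G$ by the support $\setu\subseteq\{1:s-1\}$ of $\bsh'$, which turns each coordinate factor into $\omega(z_j,k)$, so $G(k)=\sum_{\setu\subseteq\{1:s-1\}}\beta_\setu\prod_{j\in\setu}\omega(z_j,k)$ and $\widetilde G(k)=\sum_{\setu\subseteq\{1:s-1\}}\beta_{\setu\cup\{s\}}\prod_{j\in\setu}\omega(z_j,k)$; substituting $\beta_\setu=\gamma_{\setu\cup\setw}$, multiplying by $[2\zeta(2\alpha)]^{|\setw|}$ and summing over $\setw\subseteq\{s+1:d\}$ as in \eqref{eq:Tds} gives exactly the asserted identity, with the $\psi$-term producing $V_{d,s}(k)$ and the $\omega(z_s,k)$-term producing $2\,W_{d,s}(k)$.

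I expect the main obstacle to be the bookkeeping of $\supp(\bsh)$ and $r'(\bsh)$ as the last coordinate passes through zero: this is what forces the appearance of the two distinct partial generating functions $G$ (coordinate $s$ absent) and $\widetilde G$ (coordinate $s$ present), and it has to be done consistently in the diagonal correction and in the decomposition $F=G+\omega\widetilde G$. Once that relation is established, the collapse of the three squared terms into the single $\psi$-combination is a one-line computation, and the passage from $\theta_s$ to $T_{d,s}$ is just re-summation over $\setw$.
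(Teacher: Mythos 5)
Your proposal is correct and follows essentially the same route as the paper's proof: substitute $\bsq=\bsh+\bsell$, insert the character-sum indicator, write the unconstrained double sum as the square of a real generating function, subtract the diagonal contributions $q_s=h_s=0$ and $q_s=h_s\ne 0$, and use the decomposition $F=G+\omega(z_s,k)\widetilde G$ (the paper's split over whether $s\in\setu$) to collapse everything into the $\psi$- and $\omega$-terms. Your explicit remark on absolute summability and the named generating functions $F,G,\widetilde G$ are only presentational differences.
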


\begin{proof}
With the substitution $\bsq = \bsh + \bsell$ and the abbreviation $\bsz =
(z_1,\ldots,z_s)$, we can rewrite \eqref{eq:theta} as
\begin{align*}
  \theta_s(z_1,\ldots,z_s; \{\beta_\setu\}_{\setu\subseteq\{1:s\}})
  &\,=\,
  \sum_{\bsh \in \bbZ^s} \sum_{\substack{\bsq \in \bbZ^s,\, q_s \ne h_s \\ (\bsq-\bsh)\cdot\bsz \equiv_n 0}}
  \frac{\beta_{\supp(\bsh)}}{r'(\bsh)} \frac{\beta_{\supp(\bsq)}}{r'(\bsq)} \\
  &\,=\, \frac{1}{n} \sum_{k\in\bbZ_n}
  \sum_{\bsh \in \bbZ^s} \sum_{\substack{\bsq \in \bbZ^s \\ q_s \ne h_s}}
  \frac{\beta_{\supp(\bsh)}}{r'(\bsh)} \frac{\beta_{\supp(\bsq)}}{r'(\bsq)} e^{2\pi\ri k(\bsq-\bsh)\cdot\bsz/n},
\end{align*}
where we used the property that $(1/n)\sum_{k\in\bbZ_n} e^{2\pi\ri k
\bsell\cdot\bsz/n}$ is $1$ if $\bsell\cdot\bsz\equiv_n 0$ and is $0$
otherwise.

For each $k\in\bbZ_n$, we first ignore the condition $q_s\ne h_s$ in the
double sum over $\bsh,\bsq$ and derive
\begin{align*} 
  &\sum_{\bsh \in \bbZ^s} \sum_{\bsq \in \bbZ^s}
  \frac{\beta_{\supp(\bsh)}}{r'(\bsh)} \frac{\beta_{\supp(\bsq)}}{r'(\bsq)} e^{2\pi\ri k(\bsq-\bsh)\cdot\bsz/n}
  \,=\, \bigg(\sum_{\bsh \in \bbZ^s} \beta_{\supp(\bsh)}\frac{e^{2\pi\ri k \bsh\cdot\bsz/n}}{r'(\bsh)}\bigg)^2 \nonumber\\
  &\quad\,=\, \bigg(\sum_{\setu\subseteq\{1:s\}} \sum_{\substack{\bsh \in \bbZ^s \\ \supp(\bsh)=\setu}}
  \beta_\setu \prod_{j\in\setu} \frac{e^{2\pi\ri k h_jz_j/n}}{|h_j|^\alpha}\bigg)^2
  \,=\, \bigg(\sum_{\setu\subseteq\{1:s\}} \beta_\setu \prod_{j\in\setu} \omega(z_j,k)\bigg)^2 \nonumber\\
  &\quad\,=\, \bigg(\sum_{s\in\setu\subseteq\{1:s\}} \beta_\setu \prod_{j\in\setu} \omega(z_j,k)
  + \sum_{s\notin\setu\subseteq\{1:s\}} \beta_\setu \prod_{j\in\setu} \omega(z_j,k)\bigg)^2 \nonumber\\
  &\quad\,=\, \bigg(\omega(z_s,k)\,\sum_{\setu\subseteq\{1:s-1\}} \beta_{\setu\cup\{s\}} \prod_{j\in\setu} \omega(z_j,k)
   + \sum_{\setu\subseteq\{1:s-1\}} \beta_{\setu} \prod_{j\in\setu} \omega(z_j,k)\bigg)^2,
\end{align*}
where we noted that summing over $\bsh$ is the same as summing over
$-\bsh$ so that the double sum becomes the square of a single sum; then we
regrouped the sum according to the support of $\bsh$ and used the
definition of $\omega(z,k)$ in \eqref{eq:omega}; finally we split the sum
depending on whether or not $s$ belongs to $\setu$.

Next we need to subtract off the terms in the double sum with $q_s = h_s =
0$:
\begin{align*} 
  \sum_{\substack{\bsh \in \bbZ^s \\ h_s=0}} \sum_{\substack{\bsq \in \bbZ^s \\ q_s=0}}
  \frac{\beta_{\supp(\bsh)}}{r'(\bsh)} \frac{\beta_{\supp(\bsq)}}{r'(\bsq)} e^{2\pi\ri k(\bsq-\bsh)\cdot\bsz/n}
  &\,=\, \bigg(\sum_{\setu\subseteq\{1:s-1\}} \beta_{\setu} \prod_{j\in\setu} \omega(z_j,k)\bigg)^2,
\end{align*}
as well as the terms with $q_s = h_s \ne 0$:
\begin{align*} 
  &\sum_{\substack{\bsh \in \bbZ^s \\ h_s\ne 0}} \sum_{\substack{\bsq \in \bbZ^s \\ q_s=h_s}}
  \frac{\beta_{\supp(\bsh)}}{r'(\bsh)} \frac{\beta_{\supp(\bsq)}}{r'(\bsq)} e^{2\pi\ri k(\bsq-\bsh)\cdot\bsz/n} \nonumber\\
  &\,=\, \sum_{h_s\in\bbZ\setminus\{0\}} \frac{1}{|h_s|^{2\alpha}}
  \sum_{\bsh \in \bbZ^{s-1}} \sum_{\bsq \in \bbZ^{s-1}}
  \frac{\beta_{\supp(\bsh)\cup\{s\}}}{r'(\bsh)} \frac{\beta_{\supp(\bsq)\cup\{s\}}}{r'(\bsq)}
  e^{2\pi\ri k(\bsq-\bsh)\cdot(z_1,\ldots,z_{s-1})/n} \nonumber\\
  &\,=\, 2\zeta(2\alpha)\,\bigg(\sum_{\setu\subseteq\{1:s-1\}} \beta_{\setu\cup\{s\}} \prod_{j\in\setu} \omega(z_j,k)\bigg)^2.
\end{align*}

Combining these expressions yields
\begin{align*}
  &\theta_s(z_1,\ldots,z_s; \{\beta_\setu\}_{\setu\subseteq\{1:s\}}) \\
  &\,=\, \frac{1}{n} \sum_{k\in\bbZ_n}
  \Big( [\omega(z_s,k)]^2 - 2\zeta(2\alpha)\Big)
  \,\bigg(\sum_{\setu\subseteq\{1:s-1\}} \beta_{\setu\cup\{s\}} \prod_{j\in\setu} \omega(z_j,k)\bigg)^2 \\
  &\qquad + \frac{2}{n} \sum_{k\in\bbZ_n}
  \omega(z_s,k)\,\bigg(\sum_{\setu\subseteq\{1:s-1\}} \beta_{\setu\cup\{s\}} \prod_{j\in\setu} \omega(z_j,k)\bigg)
  \bigg(\sum_{\setu\subseteq\{1:s-1\}} \beta_{\setu} \prod_{j\in\setu} \omega(z_j,k)\bigg),
\end{align*}
which, together with \eqref{eq:Tds}, leads to the formulas in the lemma.
\end{proof}

If the quantities $V_{d,s}(k)$ and $W_{d,s}(k)$ are stored for each value
of $k \in \bbZ_n$ as $n$-vectors, denoted by $\bsv_{d,s}$ and
$\bsw_{d,s}$, respectively, then we would be able to calculate
$T_{d,s}(z_1,\ldots,z_{d-1},z_s)$ for all values of $z_s \in
\{1,\ldots,n-1\}$ at once
in terms of two matrix-vector multiplications
\begin{align*} 
  \frac{1}{n} \Psi_n \bsv_{d,s} + \frac{2}{n} \Omega_n \bsw_{d,s},
\end{align*}
with the $(n-1)\times n$ matrices
\begin{align*}
  \Omega_n &\,:=\, \big[\omega(z,k)\big]_{z\in\{1,\ldots,n-1\},\,k\in\bbZ_n},  \\
  \Psi_n &\,:=\, \big[[\omega(z,k)]^2 - 2\zeta(2\alpha)\big]_{z\in\{1,\ldots,n-1\},\,k\in\bbZ_n}.
\end{align*}
Actually the $- 2\zeta(2\alpha)$ term can be left out because it does not
affect the choice of the new component $z_s$. When $\alpha \ge 2$ is an
even integer, we can write
\begin{align*}
  \omega(z,k) \,=\, \frac{(2\pi)^\alpha}{(-1)^{\alpha/2+1}\alpha!} B_\alpha\Big(\frac{kz\bmod n}{n}\Big),
\end{align*}
where $B_\alpha$ is the Bernoulli polynomial of degree $\alpha$. Following
the standard fast CBC literature \cite{NC06a,NC06b,NC06c,Nuy14}, since the
function $\omega(z,k)$ depends only on the value of $(kz \bmod n)$, by an
appropriate reordering of the rows and columns of the matrices into a
circulant form when $n$ is prime (treating the $k=0$ column separately),
both matrix-vector multiplications can be done in $\calO(n \log(n))$
operations using FFT. For composite $n$ this is more complicated and
depends on the number of prime factors of $n$ \cite{NC06b}; we assume this
to be small and omit it in the description below.

Whether we can compute and store $V_{d,s}(k)$ and $W_{d,s}(k)$ efficiently
depends on the structure of the weights. We will investigate this for
different types of weights in the remaining sections. Our conclusion is
summarized in Theorem~\ref{thm:cost} at the end of the paper. All
construction costs are of the form
\[
  \calO(d\, n \log(n) + d\, n\, X),
\]
where $X$ reflects the cost of obtaining the values $V_{d,s}(k)$ and
$W_{d,s}(k)$ for one $k\in\bbZ_n$. As we just explained, if the
values of $V_{d,s}(k)$ and $W_{d,s}(k)$ are available we can find the best
value for $z_s$ in $\calO(n \log(n))$ operations, therefore the ``search''
cost to determine the entire generating vector is $\calO(d\, n \log(n))$.
We will store different quantities during the search in order to obtain
$V_{d,s}(k)$ and $W_{d,s}(k)$ efficiently, therefore incurring some memory
``storage'' cost. We will have to update these stored quantities in each
step after $z_s$ is chosen, thus incurring an ``update'' cost. This update
cost includes the computational complexity of recovering the values of
$V_{d,s}(k)$ and $W_{d,s}(k)$ from the stored quantities, in preparation
for the search for $z_{s+1}$. We remark that we are particularly
interested in large $d$ and large $n$ and so prefer to have linear
complexity $\calO(d\, n)$ or nearly linear complexity such as
$\calO(d\, n \log(n))$. We will show that this is possible in all cases
with respect to $n$. With respect to $d$ the complexity is $\calO(d^2
\log(d))$ for order dependent weights and POD weights, and unfortunately
it is $\calO(d^3)$ for SPOD weights.

\section{Product weights}\label{sec:product}

\begin{lemma} \label{lem:VW-prod}
In the case of product weights $\gamma_\setu = \prod_{j\in\setu}
\gamma_j$, we have for the quantities in Lemma~\ref{lem:semi}
\begin{align*}
  V_{d,s}(k)
  &\,=\, \gamma_s^2 \bigg(\prod_{j=1}^{s-1} \big(1+\gamma_j\,\omega(z_j,k)\big)^2\bigg)
  \bigg(\prod_{j=s+1}^d \big(1 + 2\zeta(2\alpha)\gamma_j^2\big)\bigg)
  \,=\, \gamma_s \, W_{d,s}(k)
  , \\
  W_{d,s}(k)
  &\,=\, \gamma_s \bigg(\prod_{j=1}^{s-1} \big(1+\gamma_j\,\omega(z_j,k)\big)^2\bigg)
  \bigg(\prod_{j=s+1}^d \big(1 + 2\zeta(2\alpha)\gamma_j^2\big)\bigg).
\end{align*}
\end{lemma}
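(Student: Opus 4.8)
The plan is to substitute the product-weight form $\gamma_\setu = \prod_{j\in\setu}\gamma_j$ into the definitions of $V_{d,s}(k)$ and $W_{d,s}(k)$ from Lemma~\ref{lem:semi} and observe that every sum over subsets factors over coordinates. First I would handle $W_{d,s}(k)$. In the inner double sum, write $\gamma_{\setu\cup\{s\}\cup\setw} = \gamma_s\,\big(\prod_{j\in\setu}\gamma_j\big)\big(\prod_{j\in\setw}\gamma_j\big)$ and $\gamma_{\setu\cup\setw} = \big(\prod_{j\in\setu}\gamma_j\big)\big(\prod_{j\in\setw}\gamma_j\big)$; then each factor $\sum_{\setu\subseteq\{1:s-1\}}\gamma_{\setu\cup\{s\}\cup\setw}\prod_{j\in\setu}\omega(z_j,k)$ pulls out $\gamma_s\prod_{j\in\setw}\gamma_j$ and leaves $\sum_{\setu\subseteq\{1:s-1\}}\prod_{j\in\setu}(\gamma_j\,\omega(z_j,k)) = \prod_{j=1}^{s-1}(1+\gamma_j\,\omega(z_j,k))$, by the standard expansion of a product over a binary choice per coordinate. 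The other factor is identical but without the $\gamma_s$. Hence each summand over $\setw$ is $\big(\prod_{j\in\setw}\gamma_j\big)^2\,\gamma_s\,\prod_{j=1}^{s-1}(1+\gamma_j\,\omega(z_j,k))^2$, and the outer sum $\sum_{\setw\subseteq\{s+1:d\}}[2\zeta(2\alpha)]^{|\setw|}\big(\prod_{j\in\setw}\gamma_j\big)^2$ again factors coordinatewise into $\prod_{j=s+1}^d(1+2\zeta(2\alpha)\gamma_j^2)$. This gives the stated formula for $W_{d,s}(k)$.

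For $V_{d,s}(k)$ the argument is the same: the bracketed quantity is $\big(\sum_{\setu\subseteq\{1:s-1\}}\gamma_{\setu\cup\{s\}\cup\setw}\prod_{j\in\setu}\omega(z_j,k)\big)^2$, which under product weights equals $\big(\prod_{j\in\setw}\gamma_j\big)^2\gamma_s^2\prod_{j=1}^{s-1}(1+\gamma_j\,\omega(z_j,k))^2$, and summing against $[2\zeta(2\alpha)]^{|\setw|}$ over $\setw$ produces the same $\prod_{j=s+1}^d(1+2\zeta(2\alpha)\gamma_j^2)$ factor. Comparing the two expressions, $V_{d,s}(k)$ differs from $W_{d,s}(k)$ only in carrying $\gamma_s^2$ in place of $\gamma_s$, which is exactly the identity $V_{d,s}(k) = \gamma_s\,W_{d,s}(k)$.

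There is no real obstacle here; the only point needing a little care is the bookkeeping of the two nested levels of subset-to-product factorization — first over $\setu\subseteq\{1:s-1\}$ inside the square/product, then over $\setw\subseteq\{s+1:d\}$ in the outer sum — and keeping track of where the $\gamma_s$ factors attach. I would present the $\setw=\emptyset$ versus general $\setw$ split explicitly once to make the coordinatewise factorization transparent, and then the result is immediate.
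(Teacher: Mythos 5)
Your proposal is correct and follows exactly the same route as the paper's proof: factor the product weights across $\setu$, $\{s\}$, and $\setw$, collapse the inner sum over $\setu\subseteq\{1:s-1\}$ into $\prod_{j=1}^{s-1}(1+\gamma_j\,\omega(z_j,k))$, and collapse the outer sum over $\setw\subseteq\{s+1:d\}$ into $\prod_{j=s+1}^{d}(1+2\zeta(2\alpha)\gamma_j^2)$. No gaps.
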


\begin{proof}
For product weights and $\setu \cap \setw = \emptyset$ we have
\[
  \gamma_{\setu \cup \setw}
  \,=\,
  \bigg(\prod_{j \in \setu} \gamma_j\bigg)\bigg(\prod_{j \in \setw} \gamma_j\bigg)
  \,=\,
  \gamma_\setu \, \gamma_\setw
  .
\]
Therefore $W_{d,s}(k)$ from Lemma~\ref{lem:semi} simplifies to
\begin{align*}
  W_{d,s}(k)
  &= \sum_{\setw\subseteq\{s+1:d\}} \!\!\![2\zeta(2\alpha)]^{|\setw|}
  \bigg(\!\gamma_s \gamma_\setw \prod_{j=1}^{s-1} \big(1+\gamma_j\,\omega(z_j,k)\big)\!\bigg)
  \bigg(\!\gamma_\setw \prod_{j=1}^{s-1} \big(1+\gamma_j\,\omega(z_j,k)\big)\!\bigg) \\
  &= \gamma_s \bigg(\prod_{j=1}^{s-1} \big(1+\gamma_j\,\omega(z_j,k)\big)^2\bigg)
  \bigg(\prod_{j=s+1}^d \big(1 + 2\zeta(2\alpha)\gamma_j^2\big)\bigg).
\end{align*}
The simplified expression for $V_{d,s}(k)$ follows immediately.
\end{proof}

We note that the factor $\prod_{j=s+1}^d (1+2\zeta(2\alpha)\gamma_j^2)$,
appearing in both $V_{d,s}(k)$ and $W_{d,s}(k)$, does not make any
difference for the choice of the component $z_s$ and can be ignored. We
can store the $n$-vector
\begin{align*}
   P_{s-1}(k) \,:=\, \prod_{j=1}^{s-1} \big(1 + \gamma_j \, \omega(z_j, k)\big)^2,
\end{align*}
which can be updated in $\calO(n)$ operations using
\begin{align*}
   P_s(k) \,=\, \big(1 + \gamma_s \, \omega(z_s, k)\big)^2 P_{s-1}(k),
\end{align*}
starting with $P_0(k) :=1$, and overwritten in every step $s$ once the
choice of $z_s$ has been made, to be used in the search for $z_{s+1}$.

The overall cost of fast CBC construction for approximation with product
weights is $\calO(d\, n\log(n))$ operations for the search, $\calO(d\,n)$
operations for the update, and the memory requirement is $\calO(n)$. This
is consistent with the case for integration.

\section{Order dependent weights} \label{sec:order-dep}

\begin{lemma}\label{lem:VW-od1}
In the case of order dependent weights $\gamma_\setu = \Gamma_{|\setu|}$,
we have for the quantities in Lemma~\ref{lem:semi}
\begin{align*}
  V_{d,s}(k)
  &\,=\, \sum_{m=0}^{d-s} \binom{d-s}{m}\, [2\zeta(2\alpha)]^{m}\,
  \bigg(\sum_{\ell=0}^{s-1} \Gamma_{\ell+m+1}\, P_{s-1,\ell}(k)\bigg)^2, \\
  W_{d,s}(k)
  &\,=\, \sum_{m=0}^{d-s} \binom{d-s}{m}\, [2\zeta(2\alpha)]^{m}\,
  \bigg(\sum_{\ell=0}^{s-1} \Gamma_{\ell+m+1}\, P_{s-1,\ell}(k)  \bigg)
  \bigg(\sum_{\ell=0}^{s-1} \Gamma_{\ell+m}\, P_{s-1,\ell}(k) \bigg),
\end{align*}
where, with $\omega(z,k)$ defined in \eqref{eq:omega},
\begin{align} \label{eq:P-od}
  P_{s,\ell}(k) \,:=\,
  \sum_{\substack{\setu \subseteq \{1:s\} \\ |\setu| = \ell}} \prod_{j\in\setu} \omega(z_j,k)
  \qquad\mbox{for}\quad \ell = 0,\ldots,s.
\end{align}
\end{lemma}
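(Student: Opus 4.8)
The proof is a direct calculation starting from the formulas for $V_{d,s}(k)$ and $W_{d,s}(k)$ in Lemma~\ref{lem:semi}, specialised to order dependent weights $\gamma_\setu = \Gamma_{|\setu|}$. The key observation is that the inner sum $\sum_{\setu\subseteq\{1:s-1\}} \gamma_{\setu\cup\{s\}\cup\setw} \prod_{j\in\setu}\omega(z_j,k)$ depends on $\setw$ only through its cardinality $m := |\setw|$, since $\gamma_{\setu\cup\{s\}\cup\setw} = \Gamma_{|\setu|+1+m}$ when $\setu,\{s\},\setw$ are disjoint. Thus I would first regroup the inner sum by the cardinality $\ell := |\setu|$ to obtain $\sum_{\ell=0}^{s-1}\Gamma_{\ell+m+1}\,P_{s-1,\ell}(k)$ with $P_{s-1,\ell}(k)$ as defined in \eqref{eq:P-od}; and similarly the companion factor in $W_{d,s}(k)$ (where $s\notin\setu$ contributes $\gamma_{\setu\cup\setw}=\Gamma_{|\setu|+m}$) becomes $\sum_{\ell=0}^{s-1}\Gamma_{\ell+m}\,P_{s-1,\ell}(k)$.

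Once this reduction is made, the outer sum $\sum_{\setw\subseteq\{s+1:d\}}[2\zeta(2\alpha)]^{|\setw|}(\cdots)$ collapses: since the summand now depends only on $m=|\setw|$, and there are $\binom{d-s}{m}$ subsets $\setw\subseteq\{s+1:d\}$ of cardinality $m$, the sum over $\setw$ becomes $\sum_{m=0}^{d-s}\binom{d-s}{m}[2\zeta(2\alpha)]^m(\cdots)$. Substituting the two regrouped inner expressions yields exactly the claimed formulas for $V_{d,s}(k)$ and $W_{d,s}(k)$.

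There is really no hard step here; it is a routine change of summation order combined with the elementary fact that a sum of a function of $|\setw|$ over all $\setw\subseteq\{s+1:d\}$ equals the binomial-weighted sum over cardinalities. The one point that warrants a line of care is checking the index shifts in the subscripts of $\Gamma$: in $V_{d,s}$ one has the square of $\sum_\ell \Gamma_{\ell+m+1}P_{s-1,\ell}$ (the ``$+1$'' coming from the element $s$ always being present), whereas $W_{d,s}$ pairs that factor with $\sum_\ell \Gamma_{\ell+m}P_{s-1,\ell}$ (no ``$+1$'', since $s\notin\setu$ here). I would also note in passing that $P_{s-1,\ell}(k)$ satisfies the recurrence $P_{s,\ell}(k) = P_{s-1,\ell}(k) + \omega(z_s,k)\,P_{s-1,\ell-1}(k)$, which is what makes these quantities cheap to maintain during the CBC search, though that observation belongs to the algorithmic discussion following the lemma rather than to the proof of the identities themselves.
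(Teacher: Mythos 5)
Your proposal is correct and follows essentially the same route as the paper's proof: specialise $\gamma_{\setu\cup\{s\}\cup\setw}=\Gamma_{|\setu|+1+|\setw|}$ and $\gamma_{\setu\cup\setw}=\Gamma_{|\setu|+|\setw|}$, regroup the inner sums over $\setu$ by cardinality $\ell$ to produce $P_{s-1,\ell}(k)$, and collapse the outer sum over $\setw$ to a binomial-weighted sum over $m=|\setw|$. The index shifts you flag (the ``$+1$'' from the ever-present element $s$) are exactly the point the paper's displayed computation makes explicit.
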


\begin{proof}
For order dependent weights and $\setu \cap \setw = \emptyset$ we have
\[
  \gamma_{\setu \cup \setw}
  =
  \Gamma_{|\setu| + |\setw|}
  .
\]
Therefore $W_{d,s}(k)$ from Lemma~\ref{lem:semi} simplifies to
\begin{align*}
  &W_{d,s}(k)
  \,=\,
  \sum_{m=0}^{d-s}
  \sum_{\substack{\setw\subseteq\{s+1:d\} \\ |\setw|=m}}
  [2\zeta(2\alpha)]^{m}\,
  \Bigg(
  \sum_{\ell=0}^{s-1} \Gamma_{\ell+m+1} \sum_{\substack{\setu \subseteq \{1:s-1\} \\ |\setu| = \ell}} \prod_{j\in\setu} \omega(z_j,k)
  \Bigg) \\
  &\qquad\qquad\qquad\qquad\qquad\qquad\qquad\qquad\cdot
  \Bigg(
  \sum_{\ell=0}^{s-1} \Gamma_{\ell+m} \sum_{\substack{\setu \subseteq \{1:s-1\} \\ |\setu| = \ell}} \prod_{j\in\setu} \omega(z_j,k)
  \Bigg),
\end{align*}
which yields the desired formula; $V_{d,s}(k)$ is obtained analogously.
\end{proof}

Once the choice of $z_s$ has been made, the values of $P_{s,\ell}(k)$
can be updated using the recursion
\begin{align} \label{eq:P-rec-od}
  P_{s,\ell}(k)
  &\,=\,
  P_{s-1,\ell}(k)
  +
  \omega(z_s, k) \, P_{s-1,\ell-1}(k)
  ,
\end{align}
together with $P_{s,0}(k) := 1$ for all $s$ and $P_{s,\ell}(k) := 0$ for
all $\ell > s$. The vectors can be overwritten in each step $s$ if they
are updated starting from $\ell=s$ down to $\ell=1$. The storage cost is
$\calO(d\, n)$ and so is the update cost in each step.

If the values of $P_{s,\ell}(k)$ are stored, then it will require
$\calO(d^2)$ operations to compute $V_{d,s}(k)$ and $W_{d,s}(k)$ for each
$k\in\bbZ_n$ according to Lemma~\ref{lem:VW-od1}, leading to an overall
cost of $\calO(d\, n \log(n) + d^3 n)$ for the CBC construction, which is
rather high when $d$ is large and that is precisely the scenario we are
interested in. In the following lemma we derive alternative formulations
for $V_{d,s}(k)$ and $W_{d,s}(k)$ so that they can be evaluated
efficiently in $\calO(d\,\log(d))$ operations by making use of fast
matrix-vector multiplications with \emph{Hankel matrices} (i.e., constant
anti-diagonals).

\begin{lemma}\label{lem:VW-od2}
In the case of order dependent weights $\gamma_\setu = \Gamma_{|\setu|}$,
we have for the quantities in Lemma~\ref{lem:semi}
\begin{align*}
  V_{d,s}(k)
  &\,=\, \big[H_{d,s}^1 \, \bsp_{s-1}(k)\big]^\top D_{d,s} \big[H_{d,s}^1 \, \bsp_{s-1}(k)\big] , \\
  W_{d,s}(k)
  &\,=\, \big[H_{d,s}^1 \, \bsp_{s-1}(k)\big]^\top D_{d,s} \big[H_{d,s}^0 \, \bsp_{s-1}(k)\big],
\end{align*}
where, with $P_{s,\ell}(k)$ defined in \eqref{eq:P-od},
\begin{align*}
  \bsp_{s-1}(k) \,:=\,
  \big[ P_{s-1,\ell}(k) \big]_{\ell=0}^{s-1}
  \,\in\, \bbR^{s},
\end{align*}
\begin{align*}
   D_{d,s} \,:=\, \mathrm{diag}\bigg[\binom{d-s}{m} [2\zeta(2\alpha)]^{m}\bigg]_{m=0}^{d-s}
   \,\in\, \bbR^{(d-s+1)\times(d-s+1)},
\end{align*}
and $H_{d,s}^1, H_{d,s}^0 \in \bbR^{(d-s+1) \times s}$ are each a
rectangular part of a Hankel matrix:
\begin{align*}
  H_{d,s}^1 =
  \left[\!
  \begin{array}{llll}
    \Gamma_1 & \Gamma_2 & \cdots & \Gamma_s \\
    \Gamma_2 & \Gamma_3 & \cdots & \Gamma_{s+1} \\
    \;\vdots & \;\vdots & \ddots & \;\vdots \\
    \Gamma_{d-s+1} & \Gamma_{d-s+2} & \cdots & \Gamma_d \\
  \end{array} \!\!
  \right]\!,
  \quad
  H_{d,s}^0 =
  \left[\!
  \begin{array}{llll}
    \Gamma_0 & \Gamma_1 & \cdots & \Gamma_{s-1} \\
    \Gamma_1 & \Gamma_2 & \cdots & \Gamma_s \\
    \;\vdots & \;\vdots & \ddots & \;\vdots \\
    \Gamma_{d-s} & \Gamma_{d-s+1} & \cdots & \Gamma_{d-1} \\
  \end{array} \!\!
  \right]\!.
\end{align*}
\end{lemma}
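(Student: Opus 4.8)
The plan is to start directly from the two expressions for $V_{d,s}(k)$ and $W_{d,s}(k)$ already established in Lemma~\ref{lem:VW-od1} and simply recast their nested sums as matrix--vector products followed by a (bi)linear form. First I would observe that the inner sum $\sum_{\ell=0}^{s-1}\Gamma_{\ell+m+1}\,P_{s-1,\ell}(k)$, which appears in both $V_{d,s}(k)$ and $W_{d,s}(k)$, is for each fixed $m\in\{0,\ldots,d-s\}$ precisely the $m$-th entry of $H_{d,s}^1\,\bsp_{s-1}(k)$: by inspection of the displayed matrix, with rows indexed by $m$ from $0$ to $d-s$ and columns by $\ell$ from $0$ to $s-1$, the $(m,\ell)$ entry of $H_{d,s}^1$ equals $\Gamma_{m+\ell+1}$. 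Likewise the inner sum $\sum_{\ell=0}^{s-1}\Gamma_{\ell+m}\,P_{s-1,\ell}(k)$ appearing in $W_{d,s}(k)$ is the $m$-th entry of $H_{d,s}^0\,\bsp_{s-1}(k)$, since the $(m,\ell)$ entry of $H_{d,s}^0$ equals $\Gamma_{m+\ell}$. One should also check the dimensions line up: $\bsp_{s-1}(k)\in\bbR^s$, both $H_{d,s}^1$ and $H_{d,s}^0$ are $(d-s+1)\times s$, and the resulting vectors sit in $\bbR^{d-s+1}$, matching $D_{d,s}\in\bbR^{(d-s+1)\times(d-s+1)}$.

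Next I would rewrite the outer sum over $m$. In the formula for $V_{d,s}(k)$ the summand is $\binom{d-s}{m}[2\zeta(2\alpha)]^m$ times the square of the $m$-th entry of $H_{d,s}^1\bsp_{s-1}(k)$; collecting the coefficients $\binom{d-s}{m}[2\zeta(2\alpha)]^m$ into the diagonal matrix $D_{d,s}$ gives exactly $[H_{d,s}^1\bsp_{s-1}(k)]^\top D_{d,s}[H_{d,s}^1\bsp_{s-1}(k)]$. The identical bookkeeping applied to $W_{d,s}(k)$, whose summand is $\binom{d-s}{m}[2\zeta(2\alpha)]^m$ times the product of the $m$-th entries of $H_{d,s}^1\bsp_{s-1}(k)$ and $H_{d,s}^0\bsp_{s-1}(k)$, yields $[H_{d,s}^1\bsp_{s-1}(k)]^\top D_{d,s}[H_{d,s}^0\bsp_{s-1}(k)]$. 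This completes the identification claimed in the lemma.

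To support the remark preceding the lemma (and its role in the complexity count) I would note that the $(m,\ell)$ entry of $H_{d,s}^1$ (respectively $H_{d,s}^0$) depends only on $m+\ell$, which is the defining property of a Hankel matrix; hence each is a rectangular submatrix of a genuine Hankel matrix of size $\calO(d)$, so the products $H_{d,s}^1\bsp_{s-1}(k)$ and $H_{d,s}^0\bsp_{s-1}(k)$ can be evaluated in $\calO(d\log d)$ operations by embedding into a circulant of size $\calO(d)$ and applying the FFT, after which the quadratic and bilinear forms cost only $\calO(d)$ further.

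I do not expect a genuine obstacle here: once the formulas of Lemma~\ref{lem:VW-od1} are in hand, this is pure index bookkeeping. The only point requiring care is the alignment of index conventions — the rows of $H_{d,s}^1,H_{d,s}^0$ run from $m=0$, the columns from $\ell=0$, and the Hankel entries are shifted by exactly one between $H_{d,s}^1$ and $H_{d,s}^0$. I would therefore verify the first and last rows and columns of each displayed matrix against the sums in Lemma~\ref{lem:VW-od1} to make sure these off-by-one shifts are correct.
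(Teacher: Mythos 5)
Your proposal is correct and follows essentially the same route as the paper: identify the two inner sums over $\ell$ as the $m$-th entries of $H_{d,s}^1\,\bsp_{s-1}(k)$ and $H_{d,s}^0\,\bsp_{s-1}(k)$, then absorb the coefficients $\binom{d-s}{m}[2\zeta(2\alpha)]^m$ of the outer sum into the diagonal matrix $D_{d,s}$ to obtain the stated bilinear forms. Your index checks (the $(m,\ell)$ entry of $H_{d,s}^1$ being $\Gamma_{m+\ell+1}$ and that of $H_{d,s}^0$ being $\Gamma_{m+\ell}$) are exactly the verification the paper's proof relies on.
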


\begin{proof}
Using the definition of the matrices $H^1_{d,s}$ and $H^0_{d,s}$ in the
lemma, we note that the two sums over $\ell$ from the formulas of
$V_{d,s}(k)$ and $W_{d,s}(k)$ in Lemma~\ref{lem:VW-od1} can be interpreted
as the $m$-th component of two matrix-vector products
\begin{align*}
  \sum_{\ell=0}^{s-1} \Gamma_{\ell+m+1}\,P_{s-1,\ell}(k)
  &\,=\, \big[H^1_{d,s}\,\bsp_{s-1}(k) \big]_m, \\
  \sum_{\ell=0}^{s-1} \Gamma_{\ell+m}\,P_{s-1,\ell}(k)
  &\,=\, \big[H^0_{d,s}\,\bsp_{s-1}(k) \big]_m.
\end{align*}
The outer sum over $m$ in $V_{d,s}(k)$ and $W_{d,s}(k)$ then turns the
expressions into the products involving the diagonal matrix $D_{d,s}$.
\end{proof}

Matrix-vector multiplication with a $d\times d$ Hankel matrix can be done
in $\calO(d \log(d))$ operations instead of $\calO(d^2)$ using a direct
approach. We will now elaborate on the linear algebra structure to exploit
the fast matrix-vector multiplication with our Hankel-like matrices
$H^1_{d,s}$ and $H^0_{d,s}$.

Define the $m \times m$ Hankel matrix based on the sequence $c_1, \ldots,
c_m$ to be
\[
  H(c_1,\ldots,c_m)
  \,:=\,
  \begin{bmatrix}
    c_1 & c_2 & \cdots & c_m     \\
    c_2 & c_3 & \cdots & 0 \\
    \vdots   & \vdots   & \ddots & \vdots \\
    c_m & 0 & \cdots & 0
  \end{bmatrix}
  \in \bbR^{m \times m}
  ,
\]
which is $c_m$ on the main anti-diagonal and zero below. (In general
Hankel matrices do not need to be zero under the main anti-diagonal.)
Then, our matrices $H^1_{d,s}$ are all possible submatrices of
$H(\Gamma_1,\ldots,\Gamma_d)$ spanning from the left top element (which is
$\Gamma_1$ in this case) up to an element on the main anti-diagonal (which
is $\Gamma_d$ in this case). Similarly, the matrices $H^0_{d,s}$ are
submatrices of $H(\Gamma_0,\ldots,\Gamma_{d-1})$.
For example, when $d=5$ we have
\begin{align*}
  H_{5,1}^1
  =
  \begin{bmatrix}
    \Gamma_1 \\
    \Gamma_2 \\
    \Gamma_3 \\
    \Gamma_4 \\
    \Gamma_5
  \end{bmatrix}
  , \quad
  H_{5,2}^1
  =
  \begin{bmatrix}
    \Gamma_1 & \Gamma_2 \\
    \Gamma_2 & \Gamma_3 \\
    \Gamma_3 & \Gamma_4 \\
    \Gamma_4 & \Gamma_5
  \end{bmatrix}
  , \quad
  H_{5,3}^1
  =
  \begin{bmatrix}
    \Gamma_1 & \Gamma_2 & \Gamma_3 \\
    \Gamma_2 & \Gamma_3 & \Gamma_4 \\
    \Gamma_3 & \Gamma_4 & \Gamma_5
  \end{bmatrix}
  ,
  \\
  H_{5,4}^1
  =
  \begin{bmatrix}
    \Gamma_1 & \Gamma_2 & \Gamma_3 & \Gamma_4 \\
    \Gamma_2 & \Gamma_3 & \Gamma_4 & \Gamma_5
  \end{bmatrix}
  , \quad
  H_{5,5}^1
  =
  \begin{bmatrix}
    \Gamma_1 & \Gamma_2 & \Gamma_3 & \Gamma_4 & \Gamma_5
  \end{bmatrix}
  .
\end{align*}

A matrix-vector multiplication with a general $d\times d$ Hankel matrix
can be done in $\calO(d \log(d))$, e.g., by the appropriate embedding in a
circulant matrix of size $2d \times 2d$ after reversing the rows and then
using FFTs. The cost of a matrix-vector multiplication with our
Hankel-like matrices $H_{d,s}^1$ can be bounded by $\calO(d \log(d))$ if
we consider them to be embedded in the $d\times d$ Hankel matrix
$H(\Gamma_1,\ldots,\Gamma_d)$, extend the input vector by zeros to length
$d$, apply the fast Hankel matrix-vector multiplication, and then take the
initial $d-s+1$ elements of the output vector as the result. There are of
course other ways of calculating these products. For example, for the
first and last matrices we only need $\calO(d)$ operations by a direct
calculation; and for the intermediate matrices we can find square blocks
which also have Hankel structure and do the matrix-vector multiplications
block-wise, but the matrices in the middle will then still be $\calO(d
\log(d))$. Hence we estimate the cost for all of these as $\calO(d
\log(d))$.

Using the recursion \eqref{eq:P-rec-od} and Lemma~\ref{lem:VW-od2}, we
conclude that the cost of evaluating $V_{d,s}(k)$ and $W_{d,s}(k)$ for one
$k \in \bbZ_n$ can be estimated as $\calO(d\log(d))$, and so we reduced
the total cost for the CBC algorithm to $\calO(d\, n \log(n) + d^2\log(d)
\, n)$ using $\calO(d\, n)$ memory.

If these order dependent weights have finite order $q$, i.e., $\Gamma_\ell
= 0$ for $\ell > q$, then the construction cost is $\calO(d\, n \log(n) +
d\, q \log(q) \, n)$ using $O(q\, n)$ memory.

\section{Product and order dependent (POD) weights} \label{sec:POD}

Recall that the combination of product weights and order dependent
weights is called \emph{product and order dependent} (POD) weights. We
need to modify the results from the previous two sections strategically to
get fast CBC construction for POD weights.

\begin{lemma}\label{lem:VW-pod1}
In case of POD weights $\gamma_\setu = \Gamma_{|\setu|}
\prod_{j\in\setu} \gamma_j$, we have for the quantities in
Lemma~\ref{lem:semi}
\begin{align*}
  V_{d,s}(k)
  &\,=\, \gamma_s^2 \sum_{m=0}^{d-s} C_{d,s,m}\,
  \bigg(\sum_{\ell=0}^{s-1} \Gamma_{\ell+m+1}\, P_{s-1,\ell}(k)\bigg)^2, \\
  W_{d,s}(k)
  &\,=\, \gamma_s \sum_{m=0}^{d-s} C_{d,s,m}\,
  \bigg(\sum_{\ell=0}^{s-1} \Gamma_{\ell+m+1}\, P_{s-1,\ell}(k)  \bigg)
  \bigg(\sum_{\ell=0}^{s-1} \Gamma_{\ell+m}\, P_{s-1,\ell}(k) \bigg),
\end{align*}
where, with $\omega(z,k)$ defined in \eqref{eq:omega},
\begin{align} \label{eq:C-pod}
  &C_{d,s,m} \,:=\,
  \sum_{\substack{\setw\subseteq\{s+1:d\} \\ |\setw|=m}} \prod_{j\in\setw} \big(2\zeta(2\alpha)\,\gamma_j^2\big)
  \qquad\mbox{for}\quad m=0,\ldots,d-s,
  \\
  \label{eq:P-pod}
  &P_{s,\ell}(k) \,:=\,
  \sum_{\substack{\setu \subseteq \{1:s\} \\ |\setu| = \ell}} \prod_{j\in\setu} \Big(\gamma_j\,\omega(z_j,k)\Big)
  \qquad\mbox{for}\quad \ell=0,\ldots,s,
\end{align}
\end{lemma}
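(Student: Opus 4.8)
The plan is to specialize the general formulas for $V_{d,s}(k)$ and $W_{d,s}(k)$ from Lemma~\ref{lem:semi} to the POD case $\gamma_\setu = \Gamma_{|\setu|}\prod_{j\in\setu}\gamma_j$, essentially by combining the bookkeeping already carried out separately for product weights (Lemma~\ref{lem:VW-prod}) and for order dependent weights (Lemma~\ref{lem:VW-od1}). First I would record the key algebraic identity for disjoint $\setu$ and $\setw$: writing $\setu\cup\{s\}\cup\setw$ or $\setu\cup\setw$, the POD weight factorizes as $\gamma_{\setu\cup\{s\}\cup\setw} = \Gamma_{|\setu|+1+|\setw|}\,\gamma_s\,(\prod_{j\in\setu}\gamma_j)(\prod_{j\in\setw}\gamma_j)$ and $\gamma_{\setu\cup\setw} = \Gamma_{|\setu|+|\setw|}\,(\prod_{j\in\setu}\gamma_j)(\prod_{j\in\setw}\gamma_j)$. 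The factor $\gamma_s$ pulls out of the whole expression (squared in $V_{d,s}$, to the first power in $W_{d,s}$), exactly as in the product-weight case.

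Next I would substitute this into the inner sums over $\setu\subseteq\{1:s-1\}$ appearing in Lemma~\ref{lem:semi}. Each such inner sum becomes $\sum_{\setu\subseteq\{1:s-1\}}\Gamma_{|\setu|+\text{(shift)}+|\setw|}\,(\prod_{j\in\setu}\gamma_j)(\prod_{j\in\setw}\gamma_j)\prod_{j\in\setu}\omega(z_j,k)$, where the shift is $1$ for the $\gamma_{\setu\cup\{s\}\cup\setw}$ factor and $0$ for the $\gamma_{\setu\cup\setw}$ factor. Grouping the $\setu$-sum by cardinality $\ell = |\setu|$ and recognizing the quantity $P_{s-1,\ell}(k) = \sum_{|\setu|=\ell,\,\setu\subseteq\{1:s-1\}}\prod_{j\in\setu}(\gamma_j\,\omega(z_j,k))$ defined in \eqref{eq:P-pod} — note the $\gamma_j$ is now absorbed into $P$, unlike in Section~\ref{sec:order-dep} — this inner sum collapses to $(\prod_{j\in\setw}\gamma_j)\sum_{\ell=0}^{s-1}\Gamma_{\ell+\text{(shift)}+|\setw|}\,P_{s-1,\ell}(k)$. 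Then I would handle the outer sum over $\setw\subseteq\{s+1:d\}$ by grouping by $m = |\setw|$: the factor $[2\zeta(2\alpha)]^{|\setw|}$ times $(\prod_{j\in\setw}\gamma_j)^2$ (the square coming from the two copies of the inner sum, each contributing $\prod_{j\in\setw}\gamma_j$) summed over all $\setw$ of size $m$ is precisely $C_{d,s,m}$ as defined in \eqref{eq:C-pod}. Assembling the pieces and recalling the extra $\gamma_s$ (or $\gamma_s^2$) gives the claimed formulas.

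The step I expect to require the most care is the bookkeeping of the weight factors $\gamma_j$ for $j\in\setw$: there are two inner $\setu$-sums multiplied together (this is what makes $V$ a square and $W$ a product), and each carries its own copy of $\prod_{j\in\setw}\gamma_j$, so together with the $[2\zeta(2\alpha)]^{|\setw|}$ one gets the combination $[2\zeta(2\alpha)\gamma_j^2]$ per element of $\setw$, matching the definition of $C_{d,s,m}$. It is important not to double-count the $\gamma_s$ factor — it appears once inside the first inner sum of $W_{d,s}$ (via $\gamma_{\setu\cup\{s\}\cup\setw}$) but not in the second inner sum (which uses $\gamma_{\setu\cup\setw}$), giving a single $\gamma_s$ for $W$ and $\gamma_s^2$ for $V$. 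Everything else is a direct regrouping of sums, so once these factors are tracked correctly the lemma follows immediately, and I would only sketch the $V_{d,s}(k)$ computation by saying it is obtained analogously to $W_{d,s}(k)$.
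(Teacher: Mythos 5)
Your proposal is correct and follows essentially the same route as the paper's proof: factorize the POD weight on disjoint sets, substitute into the expressions from Lemma~\ref{lem:semi}, regroup the inner $\setu$-sum by cardinality $\ell$ to obtain $P_{s-1,\ell}(k)$ (with $\gamma_j$ absorbed) and the outer $\setw$-sum by cardinality $m$ to obtain $C_{d,s,m}$, tracking the single $\gamma_s$ for $W_{d,s}$ versus $\gamma_s^2$ for $V_{d,s}$. The careful bookkeeping of the two copies of $\prod_{j\in\setw}\gamma_j$ combining with $[2\zeta(2\alpha)]^{|\setw|}$ is exactly the point the paper's displayed computation makes.
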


\begin{proof}
For POD weights and $\setu \cap \setw = \emptyset$ we have
\[
  \gamma_{\setu \cup \setw}
  =
  \bigg(\prod_{j\in\setu} \gamma_j \bigg)
  \bigg(\prod_{j\in\setw} \gamma_j \bigg)\,
  \Gamma_{|\setu| + |\setw|}.
\]
Therefore $W_{d,s}(k)$ from Lemma~\ref{lem:semi} simplifies to
\begin{align*}
  &W_{d,s}(k)
  \,=\,
  \sum_{m=0}^{d-s}
  \sum_{\substack{\setw\subseteq\{s+1:d\} \\ |\setw|=m}}
  [2\zeta(2\alpha)]^{m}\,\bigg(\gamma_s\prod_{j\in\setw} \gamma_j^2\bigg)
  \bigg(
  \sum_{\ell=0}^{s-1} \Gamma_{\ell+m+1}
  \sum_{\substack{\setu \subseteq \{1:s-1\} \\ |\setu| = \ell}}
  \prod_{j\in\setu} \Big(\gamma_j\,\omega(z_j,k)\Big)
  \bigg) \\
  &\qquad\qquad\qquad\qquad\qquad\qquad\qquad\qquad\quad\qquad\qquad\cdot
  \bigg(
  \sum_{\ell=0}^{s-1} \Gamma_{\ell+m} \sum_{\substack{\setu \subseteq \{1:s-1\} \\ |\setu| = \ell}}
  \prod_{j\in\setu} \Big(\gamma_j\,\omega(z_j,k)\Big)
  \bigg),
\end{align*}
which yields the desired formula; $V_{d,s}(k)$ is obtained analogously.
\end{proof}

Again we obtain alternative formulations to allow efficient calculations.

\begin{lemma}\label{lem:VW-pod2}
In the case of POD weights $\gamma_\setu = \Gamma_{|\setu|}
\prod_{j\in\setu} \gamma_j$, we have for the quantities in
Lemma~\ref{lem:semi}
\begin{align*}
  V_{d,s}(k)
  &\,=\, \gamma_s^2\, \big[H_{d,s}^1 \, \bsp_{s-1}(k)\big]^\top D_{d,s}  \big[H_{d,s}^1 \, \bsp_{s-1}(k)\big] , \\
  W_{d,s}(k)
  &\,=\,  \gamma_s\, \big[H_{d,s}^1 \, \bsp_{s-1}(k)\big]^\top D_{d,s} \big[H_{d,s}^0 \, \bsp_{s-1}(k)\big],
\end{align*}
where, with $C_{d,s,m}$ defined in \eqref{eq:C-pod} and
$P_{s,\ell}(k)$ defined in \eqref{eq:P-pod},
\begin{align*}
   D_{d,s} := \mathrm{diag}\big[C_{d,s,m}\big]_{m=0}^{d-s}
   \,\in\, \bbR^{(d-s+1)\times(d-s+1)},
   \quad
  \bsp_{s-1}(k) :=
  \big[ P_{s-1,\ell}(k) \big]_{\ell=0}^{s-1}
  \,\in\, \bbR^{s},
\end{align*}
and the matrices $H_{d,s}^1, H_{d,s}^0 \in \bbR^{(d-s+1) \times s}$
are as defined in Lemma~\ref{lem:VW-od2}.
\end{lemma}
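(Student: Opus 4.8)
The plan is to mirror exactly the structure of the proof of Lemma~\ref{lem:VW-od2}, since Lemma~\ref{lem:VW-pod2} is just the POD-weighted analogue. Starting from the formulas for $V_{d,s}(k)$ and $W_{d,s}(k)$ given in Lemma~\ref{lem:VW-pod1}, the only new ingredient compared to the order-dependent case is the extra prefactors $\gamma_s^2$ and $\gamma_s$ (which simply pull out to the front of the respective expressions), and the fact that the coefficient of the $m$-th term in the outer sum is now $C_{d,s,m}$ instead of $\binom{d-s}{m}[2\zeta(2\alpha)]^m$. Everything else --- the inner double sum over $\ell$, the definition of $P_{s,\ell}(k)$ (now with the $\gamma_j$ absorbed into the $\omega(z_j,k)$ factor per \eqref{eq:P-pod}), and the Hankel structure of the matrices $H_{d,s}^1$ and $H_{d,s}^0$ --- is formally identical.

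First I would recall, verbatim from the proof of Lemma~\ref{lem:VW-od2}, that with $\bsp_{s-1}(k) := [P_{s-1,\ell}(k)]_{\ell=0}^{s-1}$ and the Hankel-type matrices $H_{d,s}^1$, $H_{d,s}^0$ defined there, one has for each $m \in \{0,\ldots,d-s\}$
\begin{align*}
  \sum_{\ell=0}^{s-1} \Gamma_{\ell+m+1}\, P_{s-1,\ell}(k) \,=\, \big[H_{d,s}^1\, \bsp_{s-1}(k)\big]_m,
  \qquad
  \sum_{\ell=0}^{s-1} \Gamma_{\ell+m}\, P_{s-1,\ell}(k) \,=\, \big[H_{d,s}^0\, \bsp_{s-1}(k)\big]_m,
\end{align*}
this being purely a restatement of how the entries of a Hankel matrix act on a vector, and requiring no properties of the particular sequence $\{\Gamma_\ell\}$. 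Note that here $P_{s-1,\ell}(k)$ is the POD version from \eqref{eq:P-pod}, but since the Hankel identities only involve the $\Gamma$'s and the formal vector $\bsp_{s-1}(k)$, nothing changes.

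Next I would substitute these two identities into the expressions for $V_{d,s}(k)$ and $W_{d,s}(k)$ from Lemma~\ref{lem:VW-pod1}. The outer sum $\sum_{m=0}^{d-s} C_{d,s,m}(\cdots)$ becomes a quadratic form: writing $\bsu := H_{d,s}^1\, \bsp_{s-1}(k)$ and $\bst := H_{d,s}^0\, \bsp_{s-1}(k)$, we get $\sum_m C_{d,s,m}\, u_m^2 = \bsu^\top D_{d,s}\, \bsu$ for $V_{d,s}(k)$ (up to the $\gamma_s^2$ prefactor) and $\sum_m C_{d,s,m}\, u_m t_m = \bsu^\top D_{d,s}\, \bst$ for $W_{d,s}(k)$ (up to the $\gamma_s$ prefactor), where $D_{d,s} = \mathrm{diag}[C_{d,s,m}]_{m=0}^{d-s}$ is exactly the diagonal matrix in the statement. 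This is precisely the claimed identity, so the proof is essentially a two-line reduction once the Hankel bookkeeping is in place.

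There is no real obstacle here; if anything, the only point to be careful about is the \emph{bookkeeping of which quantity $P_{s,\ell}$ refers to} --- in Lemma~\ref{lem:VW-od2} it is the version \eqref{eq:P-od} without the $\gamma_j$ factors, whereas here it is the version \eqref{eq:P-pod} \emph{with} the $\gamma_j$ factors absorbed --- and correspondingly that the $\gamma_s^2$ and $\gamma_s$ prefactors are kept outside the quadratic form rather than being folded into $\bsp$ or $D_{d,s}$. Once those identifications are stated, the proof is a direct copy of the order-dependent argument, and I would simply write: ``The proof is identical to that of Lemma~\ref{lem:VW-od2}, with $\binom{d-s}{m}[2\zeta(2\alpha)]^m$ replaced by $C_{d,s,m}$, with $P_{s,\ell}(k)$ as in \eqref{eq:P-pod}, and with the prefactors $\gamma_s^2$ and $\gamma_s$ carried along from Lemma~\ref{lem:VW-pod1}.''
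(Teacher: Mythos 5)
Your proposal is correct and follows exactly the route the paper takes: the paper's proof of this lemma is literally ``analogous to the proof of Lemma~\ref{lem:VW-od2},'' and you have spelled out precisely that analogy --- interpreting the inner sums over $\ell$ as components of the Hankel matrix--vector products $H_{d,s}^1\,\bsp_{s-1}(k)$ and $H_{d,s}^0\,\bsp_{s-1}(k)$, turning the outer sum over $m$ into a quadratic form with the diagonal matrix $D_{d,s}=\mathrm{diag}[C_{d,s,m}]$, and carrying the prefactors $\gamma_s^2$ and $\gamma_s$ from Lemma~\ref{lem:VW-pod1}. Your bookkeeping remarks about $P_{s,\ell}(k)$ being the version \eqref{eq:P-pod} with the $\gamma_j$ factors absorbed are also consistent with the statement.
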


\begin{proof}
The proof is analogous to the proof of Lemma~\ref{lem:VW-od2}.
\end{proof}

The quantities $P_{s,\ell}(k)$ in \eqref{eq:P-pod} can be calculated in
essentially the same way as the case for order dependent weights in the
previous section. We now have the recursion
\begin{align} \label{eq:P-rec-pod}
  P_{s,\ell}(k)
  &\,=\,
  P_{s-1,\ell}(k)
  +
  \gamma_s\,\omega(z_s, k) \, P_{s-1,\ell-1}(k)
  ,
\end{align}
noting the extra factor $\gamma_s$ compared to \eqref{eq:P-rec-od},
together with $P_{s,0}(k) := 1$ for all $s$ and $P_{s,\ell}(k) := 0$ for
all $\ell> s$. The values can be overwritten for each step $s$ if they are
updated starting from $\ell=s$ down to $\ell=1$.

The coefficients $C_{d,s,m}$ defined in~\eqref{eq:C-pod} can also be
calculated recursively using
\begin{align} \label{eq:C-rec-pod}
  C_{d,s,m}
  &\,=\,
  C_{d,s+1,m}
  +
  2\zeta(2\alpha)\,\gamma_{s+1}^2 \, C_{d,s+1,m-1}
  ,
\end{align}
together with $C_{d,s,0} := 1$ for all $s$ and $C_{d,s,m} := 0$ for all
$m>d-s$. For each~$s$, the numbers $[C_{d,s,m}]_{m=0}^{d-s}$ can be viewed
as a vector with $d-s+1$ components. Noting that $C_{d,d,0} = 1$, the
recursion starts from the highest value of $s=d$ down to $s=1$. This can
be done at the pre-computation phase with all values stored for later use.
With varying values of $s$ and $m$, we are essentially computing and
storing a triangular matrix. This pre-computation and storage cost is
$\calO(d^2)$.

The cost to construct a $d$-dimensional generating vector $\bsz \in
\{1,\ldots,n-1\}^d$ for an $n$-point rank-$1$ lattice point set for
approximation using the CBC algorithm for POD weights is $\calO(d\, n
\log(n) + d^2 \log(d) \, n)$ using $\calO(dn)$ memory, which is the same
as the case for order dependent weights, but there is an additional
pre-computation and storage cost of $\calO(d^2)$ as indicated above.


\section{Smoothness-driven product and order dependent (SPOD) weights}
\label{sec:SPOD}

We now consider \emph{smoothness-driven product and order dependent}
(SPOD) weights of smoothness degree $\sigma \in \bbN$ of the form
\begin{align} \label{eq:spod}
  \gamma_\setu
  &=
  \sum_{\bsnu_\setu \in \{1:\sigma\}^{|\setu|}}
  \Gamma_{|\bsnu_\setu|} \prod_{j \in \setu} \gamma_{j,\nu_j},
\end{align}
where $|\bsnu_\setu| = \sum_{j\in\setu} \nu_j$. There is a sequence
$\{\gamma_{j,\nu}\}_j$ for every $\nu=1,\ldots,\sigma$. Note that for
$\setu = \emptyset$, we use the convention that the empty product is one,
and we interpret the sum over $\bsnu_\emptyset$ as a sum with a single
term $\bszero$ (or more formally the sum is over $\bsnu \in
\{0:\sigma\}^d$ with the condition that $\supp(\bsnu) = \setu$), such that
$\gamma_\emptyset = \Gamma_0$ (which in turn is typically set to $1$).

The smoothness degree $\sigma$ will most probably be related to the
smoothness parameter $\alpha$ of the function space. For example we could
have $\sigma = \alpha/2$, i.e., the number of derivatives of the
functions. We leave $\sigma$ as a general parameter below. Note that
SPOD weights with $\sigma=1$ are just POD weights.

\begin{lemma}\label{lem:VW-spod1}
In the case of SPOD weights \eqref{eq:spod}, we have for the quantities in
Lemma~\ref{lem:semi}
\begin{align*}
  V_{d,s}(k)
  &\,=\,
  \sum_{t=0}^{(d-s)\sigma} \sum_{t'=0}^{(d-s)\sigma}
  \bigg(\sum_{\ell=0}^{(s-1)\sigma} \Gamma^*_{t + \ell}\,P_{s-1,\ell}(k)\bigg)\,
  [G_{d,s}]_{t,t'}
  \bigg(\sum_{\ell'=0}^{(s-1)\sigma} \Gamma^*_{t' + \ell'}\,P_{s-1,\ell'}(k)\bigg),
  \\
  W_{d,s}(k)
  &\,=\,
  \sum_{t=0}^{(d-s)\sigma} \sum_{t'=0}^{(d-s)\sigma}
  \bigg(\sum_{\ell=0}^{(s-1)\sigma} \Gamma^*_{t + \ell}\,P_{s-1,\ell}(k)\bigg)\,
  [G_{d,s}]_{t,t'}
  \bigg(\sum_{\ell'=0}^{(s-1)\sigma} \Gamma_{t' + \ell'}\,P_{s-1,\ell'}(k)\bigg),
\end{align*}
where $\Gamma_i^* := \sum_{\nu=1}^\sigma \gamma_{s,\nu}\,\Gamma_{i+\nu}$
for $i = 0,\ldots, (d-1)\sigma$, with $\omega(z,k)$ defined in
\eqref{eq:omega},
\begin{align} \label{eq:P-spod}
  &P_{s,\ell}(k) \,:=\,
  \sum_{\substack{\bsnu \in \{0:\sigma\}^{s} \\ |\bsnu|=\ell}}
  \prod_{\substack{j=1 \\ \nu_j\ne 0}}^{s} \big(\gamma_{j,\nu_j} \,\omega(z_j,k)\big)
  \qquad\mbox{for}\quad \ell=0,\ldots,s\sigma,
 \\
 \label{eq:G-spod}
 &G_{d,s}
 \,:=\,
  \Bigg[
  \sum_{\setw\subseteq\{s+1:d\}}
  \sum_{\substack{\bsnu_\setw \in \{1:\sigma\}^{|\setw|} \\ |\bsnu_\setw|=t}}
  \sum_{\substack{\bsnu'_\setw \in \{1:\sigma\}^{|\setw|} \\ |\bsnu'_\setw|=t'}}
  \prod_{j\in\setw} \big(2\zeta(2\alpha)\,\gamma_{j,\nu_j}\,\gamma_{j,\nu_j'}\big)
  \Bigg]_{t,t'=0}^{(d-s)\sigma}.
\end{align}
\end{lemma}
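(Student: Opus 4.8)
The plan is to specialise the expressions for $V_{d,s}(k)$ and $W_{d,s}(k)$ in Lemma~\ref{lem:semi} to the SPOD form~\eqref{eq:spod}, mimicking the POD derivation in Lemma~\ref{lem:VW-pod1} but now carrying along an extra layer of smoothness multi-indices. The starting point is the observation that for pairwise disjoint $\setu\subseteq\{1:s-1\}$, $\{s\}$ and $\setw\subseteq\{s+1:d\}$ the SPOD weight factorises across the three blocks,
\[
  \gamma_{\setu\cup\{s\}\cup\setw}
  \,=\,
  \sum_{\bsnu_\setu\in\{1:\sigma\}^{|\setu|}}\sum_{\nu_s=1}^{\sigma}\sum_{\bsnu_\setw\in\{1:\sigma\}^{|\setw|}}
  \Gamma_{|\bsnu_\setu|+\nu_s+|\bsnu_\setw|}
  \Big(\prod_{j\in\setu}\gamma_{j,\nu_j}\Big)\,\gamma_{s,\nu_s}\,\Big(\prod_{j\in\setw}\gamma_{j,\nu_j}\Big),
\]
with the same identity minus the $\{s\}$ factor for $\gamma_{\setu\cup\setw}$; the key point is that both the product over coordinates and the order argument of $\Gamma$ split, the latter additively.

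Next I would substitute these into the two inner $\setu$-sums of Lemma~\ref{lem:semi} and reindex. Summing over a subset $\setu\subseteq\{1:s-1\}$ together with a smoothness vector $\bsnu_\setu\in\{1:\sigma\}^{|\setu|}$ is the same as summing over a single multi-index $\bsnu\in\{0:\sigma\}^{s-1}$ (with $\nu_j=0$ encoding $j\notin\setu$), under which $\sum_{j\in\setu}\nu_j=|\bsnu|$; grouping by $\ell:=|\bsnu|$ and using $\eqref{eq:omega}$ produces exactly $P_{s-1,\ell}(k)$ from~\eqref{eq:P-spod}. The sum over $\nu_s$ can then be absorbed into the definition $\Gamma^*_i=\sum_{\nu=1}^{\sigma}\gamma_{s,\nu}\Gamma_{i+\nu}$, so that the first inner sum (the one containing dimension $s$) becomes $\sum_{\bsnu_\setw}\sum_{\ell}\Gamma^*_{\ell+|\bsnu_\setw|}\,P_{s-1,\ell}(k)\prod_{j\in\setw}\gamma_{j,\nu_j}$, while the second inner sum, having no $\{s\}$ block, becomes the same expression with $\Gamma$ in place of $\Gamma^*$. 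This asymmetry — dimension $s$ is singled out in one factor but not the other — is precisely what makes $W_{d,s}$ a non-symmetric bilinear form, and it is the one bookkeeping point one must not overlook.

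Finally I would expand the outer product: a square for $V_{d,s}(k)$, and a product of the two distinct factors above for $W_{d,s}(k)$. Introducing independent labels $t:=|\bsnu_\setw|$ and $t':=|\bsnu'_\setw|$ for the two copies of the $\setw$-index and writing $[2\zeta(2\alpha)]^{|\setw|}=\prod_{j\in\setw}2\zeta(2\alpha)$, all $\setw$-dependent factors decouple from the $P$-factors; summing over $\setw\subseteq\{s+1:d\}$ and over $\bsnu_\setw,\bsnu'_\setw$ subject to $|\bsnu_\setw|=t$, $|\bsnu'_\setw|=t'$ reproduces exactly the entry $[G_{d,s}]_{t,t'}$ of~\eqref{eq:G-spod}. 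Reordering the remaining sums over $\ell,\ell',t,t'$ and collecting the $\Gamma^*$- (resp.\ $\Gamma$-) weighted $P$-sums into the bracketed factors yields the stated formulas. I expect no analytic difficulty here at all: the only real obstacle is combinatorial — verifying the index ranges ($0\le\ell,\ell'\le(s-1)\sigma$ and $0\le t,t'\le(d-s)\sigma$, so that $\Gamma^*_{t+\ell}$ is only ever evaluated at indices up to $(d-1)\sigma$) and checking that the collected $\setw$-sum matches the definition of $G_{d,s}$ verbatim.
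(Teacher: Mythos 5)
Your proposal is correct and follows essentially the same route as the paper's proof: factorising the SPOD weight across the disjoint blocks $\setu$, $\{s\}$, $\setw$; merging the sum over $\setu\subseteq\{1:s-1\}$ and $\bsnu_\setu$ into a single sum over $\bsnu\in\{0:\sigma\}^{s-1}$ grouped by $\ell=|\bsnu|$ to produce $P_{s-1,\ell}(k)$; absorbing the $\nu_s$-sum into $\Gamma^*$; and collecting the $\setw$-sums indexed by $t=|\bsnu_\setw|$, $t'=|\bsnu'_\setw|$ into $[G_{d,s}]_{t,t'}$. You also correctly flag the asymmetry ($\Gamma^*$ versus $\Gamma$) in the two factors of $W_{d,s}(k)$ and the index-range bookkeeping (in particular that the constraints $|\setw|\le t\le|\setw|\sigma$ are automatically enforced by $|\bsnu_\setw|=t$), which are exactly the points the paper's proof makes explicit.
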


\begin{proof}
For SPOD weights and $\setu \cap \setw  = \setu\cap \{s\} = \setw\cap
\{s\} = \emptyset$ we have
\begin{align*}
  \gamma_{\setu \cup \setw}
  &=
  \sum_{\bsnu_\setw \in \{1:\sigma\}^{|\setw|}}
  \bigg(\prod_{j\in\setw} \gamma_{j,\nu_j}\bigg)
  \sum_{\bsnu_\setu \in \{1:\sigma\}^{|\setu|}}
  \Gamma_{|\bsnu_\setw| + |\bsnu_\setu|}\,
  \prod_{j\in\setu} \gamma_{j,\nu_j},
  \\
  \gamma_{\setu \cup \{s\} \cup \setw}
  &=
  \sum_{\nu_s =1}^\sigma \gamma_{s,\nu_s}
  \sum_{\bsnu_\setw \in \{1:\sigma\}^{|\setw|}}
  \bigg(\prod_{j\in\setw} \gamma_{j,\nu_j}\bigg)
  \sum_{\bsnu_\setu \in \{1:\sigma\}^{|\setu|}}
  \Gamma_{|\bsnu_\setw| + |\bsnu_\setu| + \nu_s}\,
  \prod_{j\in\setu} \gamma_{j,\nu_j}.
\end{align*}
So we can write
\begin{align*}
 &\sum_{\setu\subseteq\{1:s-1\}} \gamma_{\setu\cup\setw} \prod_{j\in\setu} \omega(z_j,k) \\
 &\qquad\,=\,
 \sum_{\bsnu_\setw \in \{1:\sigma\}^{|\setw|}}
  \bigg(\prod_{j\in\setw} \gamma_{j,\nu_j}\bigg)
  \sum_{\setu\subseteq\{1:s-1\}} \sum_{\bsnu_\setu \in \{1:\sigma\}^{|\setu|}}
  \Gamma_{|\bsnu_\setw| + |\bsnu_\setu|}\,
  \prod_{j\in\setu} \big(\gamma_{j,\nu_j} \,\omega(z_j,k)\big) \\
 &\qquad\,=\,
 \sum_{\bsnu_\setw \in \{1:\sigma\}^{|\setw|}}
  \bigg(\prod_{j\in\setw} \gamma_{j,\nu_j}\bigg)
  \sum_{\bsnu \in \{0:\sigma\}^{s-1}}
  \Gamma_{|\bsnu_\setw| + |\bsnu|}\,
  \prod_{\substack{j=1 \\ \nu_j\ne 0}}^{s-1} \big(\gamma_{j,\nu_j} \,\omega(z_j,k)\big) \\
 &\qquad\,=\,
  \sum_{t=|\setw|}^{|\setw|\sigma} \sum_{\ell=0}^{(s-1)\sigma}
  \Gamma_{t + \ell}\,
  \bigg(\underbrace{\sum_{\substack{\bsnu_\setw \in \{1:\sigma\}^{|\setw|} \\ |\bsnu_\setw|=t}}
  \prod_{j\in\setw} \gamma_{j,\nu_j}}_{=:\,Q_{\setw,t}}
  \bigg)
  \bigg(\underbrace{\sum_{\substack{\bsnu \in \{0:\sigma\}^{s-1} \\ |\bsnu|=\ell}}
  \prod_{\substack{j=1 \\ \nu_j\ne 0}}^{s-1} \big(\gamma_{j,\nu_j} \,\omega(z_j,k)\big)}_{=:\, P_{s-1,\ell}(k)}
  \bigg).
\end{align*}
Similarly we obtain
\begin{align*}
 \sum_{\setu\subseteq\{1:s-1\}} \gamma_{\setu\cup\{s\}\cup\setw} \prod_{j\in\setu} \omega(z_j,k)
 &\,=\,
  \sum_{\nu_s=1}^\sigma \gamma_{s,\nu_s}
  \sum_{t=|\setw|}^{|\setw|\sigma} \sum_{\ell=0}^{(s-1)\sigma}
  \Gamma_{t + \ell+\nu_s}\,Q_{\setw,t}\,P_{s-1,\ell}(k) \\
 &\,=\,
  \sum_{t=|\setw|}^{|\setw|\sigma} \sum_{\ell=0}^{(s-1)\sigma}
  \Gamma^*_{t + \ell}\,Q_{\setw,t}\,P_{s-1,\ell}(k),
\end{align*}
where we introduced the sequence $\Gamma^*_i := \sum_{\nu=1}^\sigma
\gamma_{s,\nu}\, \Gamma_{i+\nu}$ for $i = 0,\ldots, (d-1)\sigma$.
Therefore $W_{d,s}(k)$ from Lemma~\ref{lem:semi} becomes
\begin{align*}
  W_{d,s}(k)
  &= \sum_{\setw\subseteq\{s+1:d\}} [2\zeta(2\alpha)]^{|\setw|}
  \bigg(
  \sum_{t=|\setw|}^{|\setw|\sigma} \sum_{\ell=0}^{(s-1)\sigma}
  \Gamma^*_{t + \ell}\,Q_{\setw,t}\,P_{s-1,\ell}(k)\bigg) \\
  &\qquad\qquad\qquad\qquad\qquad\qquad\qquad\quad\cdot
  \bigg(\sum_{t'=|\setw|}^{|\setw|\sigma} \sum_{\ell'=0}^{(s-1)\sigma}
  \Gamma_{t' + \ell'}\,Q_{\setw,t'}\,P_{s-1,\ell'}(k)\bigg) \\
  &=
  \sum_{t=0}^{(d-s)\sigma} \sum_{t'=0}^{(d-s)\sigma}
  \bigg(\sum_{\ell=0}^{(s-1)\sigma} \Gamma^*_{t + \ell}\,P_{s-1,\ell}(k)\bigg)\,
  [G_{d,s}]_{t,t'}
  \bigg(\sum_{\ell'=0}^{(s-1)\sigma} \Gamma_{t' + \ell'}\,P_{s-1,\ell'}(k)\bigg),
\end{align*}
where we swapped the order of summations and introduced
\begin{align*}
 [G_{d,s}]_{t,t'}
 &:=\, \sum_{\substack{\setw\subseteq\{s+1:d\} \\ |\setw| \le t\le |\setw|\sigma \\ |\setw|\le t'\le |\setw|\sigma}}
 [2\zeta(2\alpha)]^{|\setw|}\, Q_{\setw,t}\,Q_{\setw,t'} \\
 &\,=\,
  \sum_{\setw\subseteq\{s+1:d\}}
  [2\zeta(2\alpha)]^{|\setw|}\,
  \sum_{\substack{\bsnu_\setw \in \{1:\sigma\}^{|\setw|} \\ |\bsnu_\setw|=t}}
  \bigg(\prod_{j\in\setw} \gamma_{j,\nu_j}\bigg)
  \sum_{\substack{\bsnu'_\setw \in \{1:\sigma\}^{|\setw|} \\ |\bsnu'_\setw|=t'}}
  \bigg(\prod_{j\in\setw} \gamma_{j,\nu_j'}\bigg),
\end{align*}
which is equivalent to the definition \eqref{eq:G-spod}. In the equality
above we dropped the conditions $|\setw| \le t\le |\setw|\sigma$ and
$|\setw|\le t'\le |\setw|\sigma$ under the sum over $\setw$ because those
conditions are already enforced by the conditions $|\bsnu_\setw|=t$ and
$|\bsnu'_\setw|=t'$ under the sums over $\bsnu_\setw$ and $\bsnu_\setw'$.
The formula for $V_{d,s}(k)$ can be obtained analogously.
\end{proof}

The values of $P_{s,\ell}(k)$ defined by \eqref{eq:P-spod} can be computed
using the recursion
\begin{align} \label{eq:P-rec-spod}
  P_{s,\ell}(k) \,=\, P_{s-1,\ell}(k) +
  \sum_{\nu=1}^{\min(\sigma,\ell)} \gamma_{s,\nu}\,\omega(z_s,k)\, P_{s-1,\ell-\nu}(k),
\end{align}
together with $P_{s,0}(k) := 1$ for all $s$ and $P_{s,\ell}(k) := 0$ for
all $\ell > s\sigma$. The values can be overwritten for each step $s$ if
they are updated starting from $\ell=s\sigma$ down to $\ell=1$.

For each $s$, the matrix $G_{d,s}$ is a square matrix of order
$(d-s)\sigma+1$. We have the recursion which connects the elements of the
matrix $G_{d,s}$ to the elements of the smaller matrix $G_{d,s+1}$,
\begin{align} \label{eq:G-rec-spod}
 [G_{d,s}]_{t,t'}
 \,=\,
  [G_{d,s+1}]_{t,t'}
  + 2\zeta(2\alpha) \sum_{\nu=1}^{\min(\sigma,t)} \sum_{\nu'=1}^{\min(\sigma,t')} \gamma_{s+1,\nu}\,\gamma_{s+1,\nu'}\,
  [G_{d,s+1}]_{t-\nu,t'-\nu'},
 \end{align}
together with $[G_{d,s}]_{0,0} := 1$ for all $s$ and $[G_{d,s}]_{t,t'} :=
0$ for all $t > (d - s)\sigma$ or $t' > (d - s)\sigma$. Trivially, for
$s=d$ we have the $1\times1$ matrix $G_{d,d} = 1$. Similarly to the values
of $C_{d,s,m}$ in the previous section, these matrices can be computed
from the highest value $s=d$ down to $s=1$. They should be pre-computed
and all values need to be stored. The storage requirement is
$\calO(d^3\sigma^2)$ while the pre-computation cost is $\calO(d^3\sigma^4
)$ using direct calculation.

We can again formulate the expressions as matrix-vector
multiplications, but in this case we are unable to benefit from the
speed-up of Hankel matrices because the matrices $G_{d,s}$ are not
diagonal.

\begin{lemma}\label{lem:VW-spod2}
In the case of SPOD weights \eqref{eq:spod}, we have for the quantities in
Lemma~\ref{lem:semi}
\begin{align*}
  V_{d,s}(k)
  &\,=\, \big[ H_{d,s,\sigma}^*\, \bsp_{s-1}(k) \big]^\top\,
  G_{d,s} \,
  \big[ H_{d,s,\sigma}^*\, \bsp_{s-1}(k) \big] \\
  W_{d,s}(k)
  &\,=\, \big[ H_{d,s,\sigma}^*\, \bsp_{s-1}(k) \big]^\top\,
  G_{d,s} \,
  \big[ H_{d,s,\sigma}^0\, \bsp_{s-1}(k) \big],
\end{align*}
where, with $P_{s,\ell}(k)$ defined in \eqref{eq:P-spod},
\begin{align*}
  \bsp_{s-1}(k) \,:=\,
  [P_{s-1,\ell}(k)]_{\ell=0}^{(s-1)\sigma} \,\in\, \bbR^{(s-1)\sigma+1},
\end{align*}
$G_{d,s} \in \bbR^{((d-s)\sigma+1)\times ((d-s)\sigma+1)}$ is defined in
\eqref{eq:G-spod},
\begin{align*}
  H^0_{d,s,\sigma}
  \,:=\,
  \left[\!\!
  \begin{array}{llll}
    \Gamma_0 & \Gamma_1 & \cdots & \Gamma_{(s-1)\sigma} \\
    \Gamma_1 & \Gamma_2 & \cdots & \Gamma_{(s-1)\sigma+1} \\
    \;\vdots & \;\vdots & \ddots & \;\vdots \\
    \Gamma_{(d-s)\sigma} & \Gamma_{(d-s)\sigma+1} & \cdots & \Gamma_{(d-1)\sigma}
  \end{array}
  \!\!\!
  \right]\!
  \,\in\, \bbR^{((d-s)\sigma+1)\times ((s-1)\sigma+1)},
\end{align*}
and $H^*_{d,s,\sigma} \in \bbR^{((d-s)\sigma+1)\times ((s-1)\sigma+1)}$ is
defined as in $H^0_{d,s,\sigma}$ but with each entry $\Gamma_i$ in the
matrix replaced by $\Gamma^*_i := \sum_{\nu=1}^\sigma
\gamma_{s,\nu}\,\Gamma_{i+\nu}$ for $i = 0,\ldots, (d-1)\sigma$.
\end{lemma}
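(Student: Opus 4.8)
The plan is to read off the matrix--vector structure directly from the scalar formulas already established in Lemma~\ref{lem:VW-spod1}; no new analytic input is needed, only careful bookkeeping of the index ranges. So the proof is essentially a reinterpretation step, and I expect it to be short.

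First I would identify the inner sums over $\ell$ and $\ell'$. By the displayed description of $H^*_{d,s,\sigma}$ and $H^0_{d,s,\sigma}$ (constant anti-diagonals, top-left entries $\Gamma^*_0$ and $\Gamma_0$ respectively), the $(t,\ell)$-entry of $H^*_{d,s,\sigma}$ is $\Gamma^*_{t+\ell}$ and that of $H^0_{d,s,\sigma}$ is $\Gamma_{t+\ell}$, for $t=0,\ldots,(d-s)\sigma$ and $\ell=0,\ldots,(s-1)\sigma$. Hence
\[
  \sum_{\ell=0}^{(s-1)\sigma} \Gamma^*_{t+\ell}\,P_{s-1,\ell}(k)
  \,=\, \big[ H^*_{d,s,\sigma}\,\bsp_{s-1}(k) \big]_t
  \qquad\text{and}\qquad
  \sum_{\ell'=0}^{(s-1)\sigma} \Gamma_{t'+\ell'}\,P_{s-1,\ell'}(k)
  \,=\, \big[ H^0_{d,s,\sigma}\,\bsp_{s-1}(k) \big]_{t'} ,
\]
with $\bsp_{s-1}(k)\in\bbR^{(s-1)\sigma+1}$. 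The dimensions are consistent: both $H^*_{d,s,\sigma}$ and $H^0_{d,s,\sigma}$ are $((d-s)\sigma+1)\times((s-1)\sigma+1)$, so the products lie in $\bbR^{(d-s)\sigma+1}$, which is exactly the range $t,t'=0,\ldots,(d-s)\sigma$ occurring in Lemma~\ref{lem:VW-spod1}.

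Next I would recognise the remaining double sum over $t,t'$ as a bilinear form with the matrix $G_{d,s}$ from \eqref{eq:G-spod}: for any vectors $\bsa=(a_t)_t$ and $\bsb=(b_{t'})_{t'}$ of length $(d-s)\sigma+1$ one has $\sum_{t,t'} a_t\,[G_{d,s}]_{t,t'}\,b_{t'} = \bsa^\top G_{d,s}\,\bsb$. Substituting $\bsa=\bsb=H^*_{d,s,\sigma}\,\bsp_{s-1}(k)$ into the formula for $V_{d,s}(k)$ in Lemma~\ref{lem:VW-spod1}, and $\bsa=H^*_{d,s,\sigma}\,\bsp_{s-1}(k)$, $\bsb=H^0_{d,s,\sigma}\,\bsp_{s-1}(k)$ into the formula for $W_{d,s}(k)$, immediately yields the two claimed identities.

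The only point that requires care --- and the closest thing to an obstacle --- is verifying that the index shift $\Gamma^*_{t+\ell}$ never exceeds the range for which $\Gamma^*_i := \sum_{\nu=1}^\sigma \gamma_{s,\nu}\,\Gamma_{i+\nu}$ is defined, namely $i=0,\ldots,(d-1)\sigma$. This holds because $t+\ell \le (d-s)\sigma + (s-1)\sigma = (d-1)\sigma$, and similarly the arguments of $\Gamma$ appearing in $H^0_{d,s,\sigma}$ stay within $0,\ldots,(d-1)\sigma$. Everything else follows directly from Lemma~\ref{lem:VW-spod1}, and I would conclude by noting that, unlike in Lemmas~\ref{lem:VW-od2} and~\ref{lem:VW-pod2}, the central matrix $G_{d,s}$ is no longer diagonal, so the subsequent computational discussion cannot exploit a Hankel speed-up for the middle factor.
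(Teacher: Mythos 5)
Your proof is correct and follows essentially the same route as the paper: read the inner sums over $\ell$ as the components $\big[H^*_{d,s,\sigma}\,\bsp_{s-1}(k)\big]_t$ and $\big[H^0_{d,s,\sigma}\,\bsp_{s-1}(k)\big]_{t'}$, then recognise the double sum over $t,t'$ as the bilinear form with $G_{d,s}$. Your additional check that $t+\ell\le(d-1)\sigma$ keeps all indices within the declared range is a harmless (and correct) extra verification not spelled out in the paper.
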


\begin{proof}
From Lemma~\ref{lem:VW-spod1} we can write
\begin{align*}
  W_{d,s}(k)
  &\,=\,
  \sum_{t=0}^{(d-s)\sigma} \sum_{t'=0}^{(d-s)\sigma}
  \big[ H_{d,s,\sigma}^*\, \bsp_{s-1}(k) \big]_{t}\,
  \big[ G_{d,s} \big]_{t,t'} \,
  \big[ H_{d,s,\sigma}^0\, \bsp_{s-1}(k) \big]_{t'} \\
  &\,=\,
  \big[ H_{d,s,\sigma}^*\, \bsp_{s-1}(k) \big]^\top\,
  G_{d,s} \,
  \big[ H_{d,s,\sigma}^0\, \bsp_{s-1}(k) \big].
\end{align*}
The formula for $V_{d,s}(k)$ can be obtained analogously.
\end{proof}

If the matrices $G_{d,s}$ are pre-computed and stored, the cost to
evaluate $V_{d,s}(k)$ and $W_{d,s}(k)$ for each $k\in\bbZ_n$ is
$\calO(d\,\sigma\log(d\,\sigma) + d^2\sigma^2) = \calO(d^2\sigma^2)$.
Hence, the cost to construct a $d$-dimensional generating vector $\bsz \in
\{1,\ldots,n-1\}^d$ for an $n$-point rank-$1$ lattice point set for
approximation using the CBC algorithm for SPOD weights is $\calO(d\, n
\log(n) + d^3\sigma^2 \, n)$ using $\calO(d^3\sigma^2 + d\,n)$ memory,
plus an additional pre-computation cost of $\calO(d^3\sigma^4)$.

As a consistency check, we verify that taking $\sigma = 1$ for SPOD
weights does recover our results for POD weights. Clearly the recursion
\eqref{eq:P-rec-spod} with $\sigma=1$ is precisely \eqref{eq:P-rec-pod}.
The situation with the matrices $G_{d,s}$ is slightly more complicated.
Consider first the recursion \eqref{eq:G-rec-spod} with $\sigma=1$ and
either $t=0$ or $t'=0$. Then
\begin{align*}
  [G_{d,s}]_{t,t'} \,=\, [G_{d,s+1}]_{t,t'} \,=\, \cdots \,=\, [G_{d,d}]_{t,t'}
  \,=\, \begin{cases} 1 & \mbox{if } t=t'=0, \\ 0 & \mbox{otherwise.} \end{cases}
\end{align*}
On the other hand, if $t>0$ and $t'>0$ then with $\sigma=1$ we obtain from
\eqref{eq:G-rec-spod}
\begin{align*}
 [G_{d,s}]_{t,t'}
 \,=\,
  [G_{d,s+1}]_{t,t'} + 2\zeta(2\alpha)\, \gamma_{s+1,1}^2\, [G_{d,s+1}]_{t-1,t'-1}.
\end{align*}
Taking $t=t'$, we see that the diagonal elements of the matrix $G_{d,s}$
are precisely the numbers $C_{d,s,m}$ as given by the recursion
\eqref{eq:C-rec-pod}. Taking $t\ne t'$, we see that the off-diagonal
elements in $G_{d,s}$ are obtained by combining only off-diagonal elements
from $G_{d,s+1}$; and by induction we can show that all off-diagonal
elements of all matrices are zero. This indicates that with $\sigma=1$ the
matrix $G_{d,s}$ is precisely the diagonal matrix $D_{d,s}$ in
Lemma~\ref{lem:VW-pod2}. Hence we conclude that our
Lemma~\ref{lem:VW-spod2} for SPOD weights with $\sigma=1$ is the same as
Lemma~\ref{lem:VW-pod2} for POD weights.

\section{Numerical results} \label{sec:num}

Before getting into the numerical experiments, we discuss some
equivalences between the different types of weights. First we note
trivially that the case of equal product weights $\gamma_j = a>0$ for all
$j\ge 1$ is the same as the case of order dependent weights $\Gamma_\ell =
a^\ell$ for all $\ell\ge 1$. Analogously, it is possible to re-scale POD
weights with an arbitrary parameter $a>0$ as follows
\[
  \gamma_\setu \,=\, \Gamma_{|\setu|} \prod_{j\in\setu} \gamma_j
  \,=\, \frac{\Gamma_{|\setu|}}{a^{|\setu|}} \prod_{j\in\setu} (a\gamma_j).
\]
These equivalences provide a convenient way to verify the accuracy of our
implementations for different types of weights. In scenarios where the two
sequences $\{\Gamma_\ell\}$ and $\{\gamma_j\}$ for POD weights have
drastically contradictory behaviors (e.g., $\Gamma_\ell$ grows fast with
increasing $\ell$ while $\gamma_j$ decays fast with increasing $j$), our
implementations can potentially run into numerical stability issues; we
can introduce an appropriate re-scaling parameter $a>0$ as above to
alleviate the problem.

We already mentioned that the case of SPOD weights with smoothness degree
$\sigma=1$ is precisely the case of POD weights. Additionally, if the
order dependent parts of SPOD weights are constant, $\Gamma_\ell = b>0$
for all $\ell\ge 1$, then we can write
\[
  \gamma_\setu \,=\, \sum_{\bsnu_\setu\in \{1:\sigma\}^{|\setu|}}
  \Gamma_{|\bsnu_\setu|} \prod_{j\in\setu} \gamma_{j,\nu_j}
  \,=\, b\, \prod_{j\in\setu} \underbrace{\sum_{\nu_j=1}^\sigma \gamma_{j,\nu_j}}_{=:\,\widetilde\gamma_j},
\]
that is, we have an equivalent formulation as POD weights with a constant
order dependent part, or just product weights if $b=1$.

In Figure~\ref{fig} we plot the values of $S_d(\bsz)$ against $n$ for
generating vectors~$\bsz$ constructed by the CBC algorithm based on three
different choices of weights:
\begin{enumerate}
\item Product weights: $\gamma_j = j^{-1.5\,\alpha}$;
\item POD weights: $\Gamma_\ell = \ell!/a^\ell$, $\gamma_j = a\,
    j^{-1.5\,\alpha}$;
\item SPOD weights: $\sigma = \alpha/2$, $\Gamma_\ell = \ell!/a^\ell$,
    $\gamma_{j,\nu} = a\,(2\,j^{-1.5\,\alpha})^\nu$;
\end{enumerate}
with the re-scaling parameter $a = (d!)^{1/d}$ for numerical stability. We
consider the target dimensions $d\in \{5,10,20,50,100\}$ and prime number
of points $n\in \{ 503, 1009, 2003, 4001, 8009, 16007,$ $32003, 64007,
128021\}$, and we explore two different smoothness parameters $\alpha=2$
and $\alpha=4$ to see if the theoretical rate of convergence $S_d(\bsz) =
\calO(n^{-\alpha+\delta})$, $\delta>0$, can be observed in practice. Our
weights have been chosen so that the implied constant in the big-$\calO$
bound is independent of the dimension $d$. However, the constant can still
be very large depending on the choice of weights and so the theoretical
convergence rate might not kick in until $n$ is large.

Recall that the initial $L_2$ approximation error is
$\max_{\setu\subseteq\{1:d\}} \gamma_\setu^{1/2}$, which is not the same
for different values of $d$ or different choices of weights. So it does
not make sense to directly compare the values of $S_d(\bsz)$ for different
$d$ or different weights; rather, we should compare only the rates of
convergence.

\begin{figure}\centering
 \includegraphics[width=10cm]{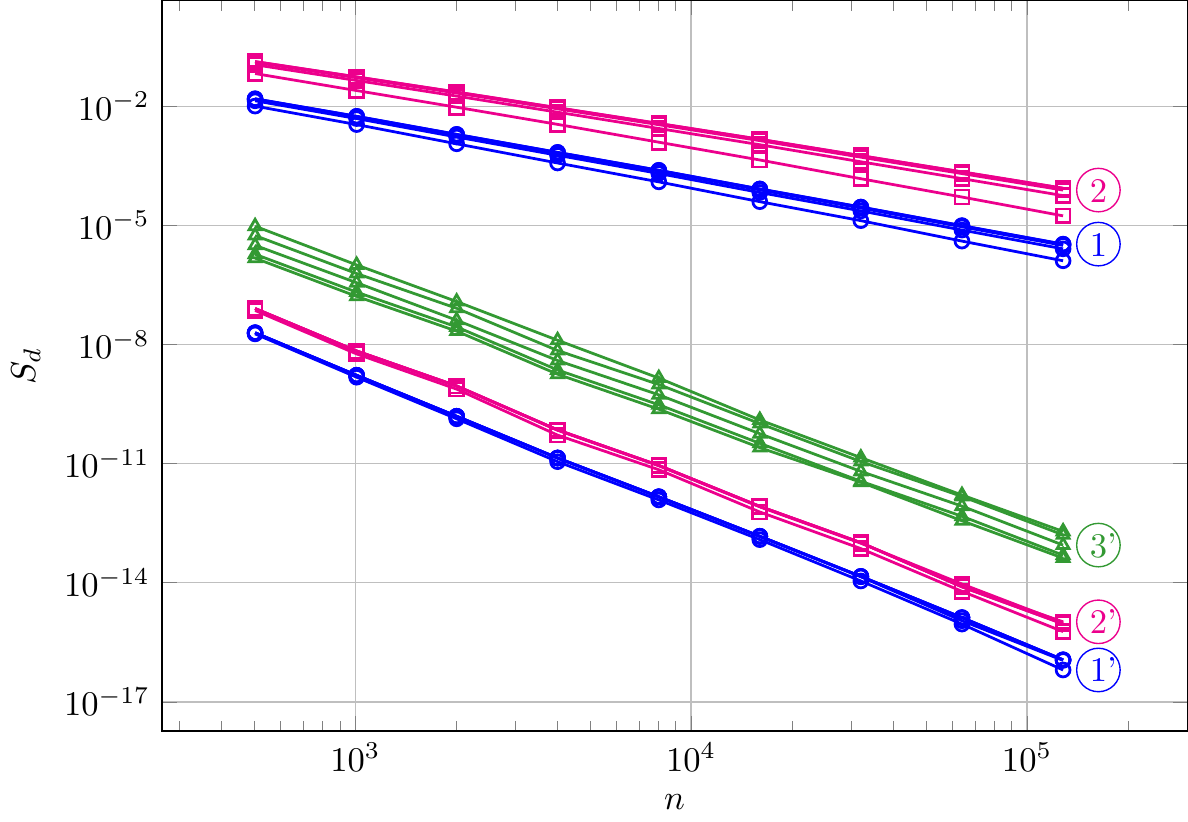}\hspace*{15mm}
 \caption{\label{fig} The values of $S_d(\bsz)$ against $n$ for different weights:
 (1) product -- blue, (2) POD -- magenta, (3) SPOD -- green,
 with $\alpha=2$ (top two groups) and $\alpha=4$ (bottom three groups).
 Each group includes five lines representing $d\in\{5,10,20,50,100\}$.
 The empirical rates of convergence for the five groups are roughly
 $n^{-1.3}, n^{-1.6}, n^{-3.1}, n^{-3.3}, n^{-3.5}$ from top down.}
\end{figure}

We see from Figure~\ref{fig} that the different values of target dimension
$d$ do not appear to affect the empirical rates of convergence, which is
consistent with our theory. For $\alpha = 2$ we observe roughly the rates
$\calO(n^{-1.3})$ for POD weights and $\calO(n^{-1.6})$ for product
weights, compared with the theoretical rate of nearly $\calO(n^{-2})$. For
$\alpha = 4$ we get roughly $\calO(n^{-3.1})$ for SPOD weights,
$\calO(n^{-3.3})$ for POD weights, and $\calO(n^{-3.5})$ for product
weights, compared with the theoretical rate of nearly $\calO(n^{-4})$.
These empirical rates exhibit the expected trend between the cases $\alpha
=2$ and $\alpha=4$.

\section{Conclusion} \label{sec:conc}

We summarize the cost of CBC construction with different forms of weights
in the theorem below.

\begin{theorem} \label{thm:cost}
The computational cost to construct a $d$-dimensional generating vector
$\bsz \in \{1,\ldots,n-1\}^d$ for an $n$-point rank-$1$ lattice point set
for approximation using the CBC construction following Algorithm~\ref{alg}
$($and satisfying Theorem~\ref{thm:final} when $n$ is prime$)$ is
\[
  \calO(d\,n\log(n) + d\,n\,X) \qquad \mbox{for search and update},
\]
where the values of $X$ for different forms of weights are summarized in
the table below, which includes pre-computation and storage costs, and a
comparison with integration.
\begin{center}
\begin{tabular}{|p{2cm}|cc|ccc|}
 \hline
         & \multicolumn{2}{c|}{Integration} & \multicolumn{3}{c|}{Approximation} \\
 \hline
 Weights & $X$ & Storage & $X$ & Pre-comp. & Storage \\
 \hline\hline
 product & $1$ & $n$ & $1$ & & $n$ \\
 \hline
 order dep. & $d$ & $d\,n$ & $d\log(d)$ & & $d\,n$ \\
 \hline
 order dep. \& finite order $q$ & $q$ & $q\,n$ & $q\log(q)$ & & $q\,n$ \\
 \hline
 POD & $d$ & $d\,n$ & $d\log(d)$ & $d^2$ & $d^2 \!+\! d\,n$ \\
 \hline
 SPOD $\sigma\!\ge\!2$ & $d\,\sigma^2$ & $d\,\sigma\,n$
      & $d^2\sigma^2$ & $d^3\sigma^4 $ & $d^3\sigma^2  \!+\! d\, n$\\
 \hline
\end{tabular}
\end{center}
\mbox{}\\
In summary, the cost is $\calO(d\,n\log(n))$ for product weights,
$\calO(d\,n\log(n) + d^2\log(d)\,n)$ for order dependent weights and POD
weights, and $\calO(d\,n\log(n) + d^3\sigma^2\,n)$ for SPOD weights with
degree $\sigma \ge 2$ $($assuming $\sigma$ is small compared to $d$ and
$n$$)$.
\end{theorem}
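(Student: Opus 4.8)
The plan is to assemble Theorem~\ref{thm:cost} from the generic cost decomposition set up after Lemma~\ref{lem:semi} together with the per-weight-type analyses of Sections~\ref{sec:product}--\ref{sec:SPOD}. The starting point is that Algorithm~\ref{alg} runs in $d$ steps, and in step $s$ Lemma~\ref{lem:semi} writes the search criterion $T_{d,s}(z_1,\ldots,z_{s-1},z_s)$, viewed as a function of the candidate $z_s\in\{1,\ldots,n-1\}$, as $\tfrac{1}{n}\,\Psi_n\,\bsv_{d,s}+\tfrac{2}{n}\,\Omega_n\,\bsw_{d,s}$. Since $\omega(z,k)$ depends only on $kz\bmod n$, a suitable permutation of rows and columns (treating the $k=0$ column separately, and assuming $n$ has few prime factors) turns $\Omega_n$ and $\Psi_n$ into circulant matrices, so each of the two matrix--vector products costs $\calO(n\log n)$ via FFT; summing over the $d$ steps yields the $\calO(d\,n\log n)$ ``search'' term common to every row of the table. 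It then remains to bound, for each weight type, the cost $X$ of producing $\bsv_{d,s}$ and $\bsw_{d,s}$ --- equivalently of producing $V_{d,s}(k)$ and $W_{d,s}(k)$ for one $k\in\bbZ_n$ --- together with the storage carried between steps and any one-off pre-computation; the update cost is then $\calO(d\,n\,X)$.

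I would then go through the weight types in turn, in each case quoting the relevant lemma and recursion. For product weights, Lemma~\ref{lem:VW-prod} reduces $V_{d,s}(k)$ and $W_{d,s}(k)$ to the scalar $P_{s-1}(k)=\prod_{j<s}(1+\gamma_j\,\omega(z_j,k))^2$, updated in $\calO(1)$ per $k$ and overwritten each step, so $X=\calO(1)$ and storage is $\calO(n)$. For order dependent weights, Lemma~\ref{lem:VW-od2} exhibits $V_{d,s}(k)$ and $W_{d,s}(k)$ as quadratic forms in $H^1_{d,s}\,\bsp_{s-1}(k)$ and $H^0_{d,s}\,\bsp_{s-1}(k)$ with a diagonal middle matrix; embedding the Hankel-like matrices $H^0_{d,s}$, $H^1_{d,s}$ into the fixed $d\times d$ Hankel matrices $H(\Gamma_0,\ldots,\Gamma_{d-1})$, $H(\Gamma_1,\ldots,\Gamma_d)$ and using a circulant embedding makes each product $\calO(d\log d)$, while the recursion \eqref{eq:P-rec-od} maintains $\bsp_s(k)$ in $\calO(d)$ per $k$; hence $X=\calO(d\log d)$ with $\calO(d\,n)$ storage, and the finite-order-$q$ case follows verbatim with $d$ replaced by $q$. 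POD weights are treated identically via Lemma~\ref{lem:VW-pod2}, the only differences being the extra scalar prefactors $\gamma_s$, $\gamma_s^2$ and that the diagonal matrix is $\mathrm{diag}[C_{d,s,m}]$; the $C_{d,s,m}$ are computed once, from $s=d$ down to $s=1$, by \eqref{eq:C-rec-pod}, at pre-computation and storage cost $\calO(d^2)$, leaving $X=\calO(d\log d)$ and total storage $\calO(d^2+d\,n)$.

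For SPOD weights I would invoke Lemma~\ref{lem:VW-spod2}: $V_{d,s}(k)$ and $W_{d,s}(k)$ are again quadratic forms, now in the vectors $H^*_{d,s,\sigma}\,\bsp_{s-1}(k)$ and $H^0_{d,s,\sigma}\,\bsp_{s-1}(k)$ of length $\calO(d\sigma)$, but the middle matrix $G_{d,s}$ is \emph{not} diagonal, so after the $\calO(d\sigma\log(d\sigma))$ Hankel products one still pays $\calO((d\sigma)^2)$ for the dense multiplication by $G_{d,s}$; the recursion \eqref{eq:P-rec-spod} maintains $\bsp_s(k)$ in $\calO(d\sigma^2)$ per $k$, and the matrices $G_{d,s}$ are pre-computed from $s=d$ downward via \eqref{eq:G-rec-spod} at cost $\calO(d^3\sigma^4)$ and stored at cost $\calO(d^3\sigma^2)$. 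Thus $X=\calO(d^2\sigma^2)$ with online memory $\calO(d^3\sigma^2+d\,n)$, and substituting each value of $X$ into $\calO(d\,n\log n+d\,n\,X)$ gives the three summary bounds; the $\sigma=1$ consistency check of Section~\ref{sec:SPOD} confirms the table collapses correctly to the POD row. I do not expect a genuine obstacle, since all ingredients are already established; the only delicate points are to separate cleanly the one-off pre-computation of the $C_{d,s,m}$ and $G_{d,s}$ from the per-step update so that the $d\,n$ factor is not inflated, and to keep track of the tacit assumption (flagged after Lemma~\ref{lem:semi}) that $n$ has few prime factors so that the FFT-based products stay $\calO(n\log n)$. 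The integration column of the table is simply quoted from \cite{NC06a,NC06b,NC06c,Nuy14,CKN06,KSS11,KKS}.
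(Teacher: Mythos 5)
Your proposal is correct and follows essentially the same route as the paper: Theorem~\ref{thm:cost} is a summary whose justification is exactly the accumulated analysis you cite --- the FFT-based matrix--vector search step from Lemma~\ref{lem:semi}, the per-weight-type evaluations of $V_{d,s}(k)$ and $W_{d,s}(k)$ via Lemmas~\ref{lem:VW-prod}, \ref{lem:VW-od2}, \ref{lem:VW-pod2} and \ref{lem:VW-spod2} with their recursions, and the separately accounted pre-computation of $C_{d,s,m}$ and $G_{d,s}$. All cost and storage entries you derive agree with the table, including the $\calO(d\sigma\log(d\sigma)+d^2\sigma^2)=\calO(d^2\sigma^2)$ bound for SPOD weights and the integration column quoted from the literature.
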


We see that the construction with SPOD weights is more costly than with
POD weights. When applying a lattice algorithm in an application, it may
be that the more complicated SPOD weights can lead to a better theoretical
rate of convergence when we impose the requirement that the overall error
bound is independent of dimension. There is then a potential trade-off
between the construction cost of the lattice generating vector with these
SPOD weights and the rate of convergence, which could be explored further
by the users. At the same time, we can also argue that the construction of
the generating vector is an offline cost and the user would be able to
pick an already existing generating vector, constructed for a space with
very similar SPOD weights, therefore immediately benefiting from the
better convergence rate.

The best possible rate of convergence for lattice algorithms for
approximation is proved \cite{BKUV17} to be only half of the optimal rate
of convergence for lattice rules for integration (i.e.,
$\calO(n^{-\alpha/4+\delta})$ versus $\calO(n^{-\alpha/2+\delta})$,
$\delta>0$). This is a negative point for lattice algorithms, since there
are other approximation algorithms such as Smolyak algorithms or sparse
grids which do not suffer from this loss of convergence rate. However, as
discussed in \cite{BKUV17}, lattice algorithms have their advantages in
terms of simplicity of construction and point generation, and stability
and efficiency in application, making them still attractive and
competitive despite the lower convergence rate.

Instead of measuring the worst case approximation error in the $L_2$ norm,
one can also consider other $L_p$ norms, including the $L_\infty$ norm.
Also the underlying Hilbert space $H_d$ can be changed into a Banach space
with, for example, a supremum norm. The error analysis from
\cite{CKNS-part1} as well as the fast algorithms from this paper can be
adapted.

Also related are \emph{spline algorithms} or \emph{kernel methods}
\cite{Wah90,ZLH06,ZKH09} or \emph{collocation} \cite{LH03,SNC16} based on
lattice points. In a reproducing kernel Hilbert space with a
``shift-invariant'' kernel (as we have in the periodic setting here), the
structure of the lattice points allows the required linear system to be
solved in $\calO(n\log(n))$ operations. Since splines have the smallest
worst case $L_2$ approximation error among all algorithms that make use of
the same sample points (see for example \cite{ZKH09}), the lattice
generating vectors constructed from this paper can be used in a spline
algorithm and the worst case error bound from \cite{CKNS-part1} will carry
over as an immediate upper bound with no further multiplying constant. The
advantage of a spline algorithm over the lattice algorithm \eqref{eq:Af}
is that there is no presence of the index set $\calA_d$, making it
extremely efficient in practice.

\paragraph{Acknowledgements}
We gratefully acknowledge the financial support from the Australian
Research Council (DP180101356).

\bibliographystyle{plain}

\begin{thebibliography}{99}

\bibitem{BKUV17}
 G. Byrenheid, L. K\"ammerer, T. Ullrich, T. Volkmer,
 {\it Tight error bounds for rank-$1$ lattice sampling in spaces of hybrid mixed smoothness},
 Numer. Math., {\bf 136} (2017), 993--1034.

\bibitem{CDS10}
  A.~Cohen, R.~DeVore, Ch.~Schwab,
  {\em Convergence rates of best $N$-term Galerkin approximations
  for a class of elliptic sPDEs},
  Found.\ Comp.\ Math., {\bf 10} (2010), 615--646.

 \bibitem{CKN06}
 R.~Cools, F.~Y.~Kuo, D.~Nuyens,
 {\it Constructing embedded lattice rules for multivariate integration},
 SIAM J.\ Sci.\ Comput., {\bf 28} (2006), 2162--2188.

\bibitem{CKNS-part1}
  R.~Cools, F.~Y.~Kuo, D.~Nuyens, I.~H.~Sloan,
  {\it Lattice algorithms for multivariate approximation in periodic spaces with general weight parameters},
  to appear in: Celebrating 75 Years of Mathematics of Computation
  (S.~C.~Brenner, I.~Shparlinski, C.-W.~Shu, and D.~Szyld, eds.), Contemporary
  Mathematics, AMS.

\bibitem{CKNS16}
  R.~Cools, F.~Y.~Kuo, D.~Nuyens, G.~Suryanarayana,
  {\it Tent-transformed lattice rules for integration and approximation of
    multivariate non-periodic functions}, J.\ Complexity, {\bf 36} (2016), 166--181.

 \bibitem{CN08}
 R.~Cools, D.~Nuyens,
 {\it A Belgian view on lattice rules},
 in: Monte Carlo and Quasi-Monte Carlo Methods 2006
 (A.~Keller, S.~Heinrich, and H.~Niederreiter, eds.),
 Springer, 2008, pp.~3--21.

\bibitem{Dic08}
  J. Dick,
  {\it Walsh spaces containing smooth functions and
  Quasi-Monte Carlo rules of arbitrary high order},
  {SIAM J. Numer. Anal.}, {\bf 46} (2008), 1519--1553.

 \bibitem{DKKS07}
 J. Dick, P. Kritzer, F. Y. Kuo, I. H. Sloan,
 {\it Lattice-Nystr\"om method for Fredholm integral equations of the second kind with convolution type kernels},
 J.\ Complexity, {\bf 23} (2007), 752--772.

\bibitem{DKLNS14}
  {J.~Dick, F.~Y.~Kuo, Q.~T.~Le~Gia, D.~Nuyens, Ch.~Schwab},
  {\it Higher order QMC Galerkin discretization for parametric
  operator equations},
  {SIAM J.\ Numer.\ Anal.}, {\bf 52}  (2014), 2676--2702.

\bibitem{DKS13}
  {J.~Dick, F.~Y.~Kuo, I.~H.~Sloan},
  {\it High-dimensional integration: the Quasi-Monte Carlo way},
  {Acta Numer.}, \textbf{22} (2013), 133--288.

 \bibitem{DP10}
 J.~Dick, F.~Pillichshammer,
 \textit{Digital Nets and Sequences},
 Cambridge University Press, Cambridge, 2010.

 \bibitem{DSWW06}
 J.~Dick, I.~H.~Sloan, X.~Wang, H.~Wo\'zniakowski,
 {\it Good lattice rules in weighted Korobov spaces with general weights},
 {Numer. Math.}, \textbf{103} (2006), 63--97.

 \bibitem{ELN18}
 A.~Ebert, H.~Le\"ovey, D.~Nuyens,
 {\it Successive coordinate search and component-by-component construction of rank-$1$ lattice rules},
 in: Monte Carlo and Quasi-Monte Carlo Methods 2016 (A. B. Owen and P. W. Glynn, eds.),
 Springer-Verlag, 2018, pp.~197--215.

\bibitem{GKNSSS15}
  {I.~G.~Graham, F.~Y.~Kuo, J.~A.~Nichols, R.~Scheichl, Ch.~Schwab, I.~H.~Sloan},
  {\it Quasi-Monte Carlo finite element methods for elliptic PDEs with lognormal random coefficients},
  {Numer.\ Math.}, {\bf 131} (2015), 329--368.

\bibitem{GKNSS18b}
  {I.~G.~Graham, F.~Y.~Kuo, D.~Nuyens, R.~Scheichl, and I.~H.~Sloan},
  {\it Circulant embedding with QMC: analysis for elliptic PDE with lognormal coefficients},
  {Numer.\ Math.}, {\bf 140} (2018), 479--511.

 \bibitem{Hic98b}
 F.~J.~Hickernell,
 {\it Lattice rules: How well do they measure up?},
 in: Random and Quasi-Random Point Sets (P.~Hellekalek and G.~Larcher, eds.),
 Springer, Berlin, 1998, pp.~109--166.

 \bibitem{HH02}
 F.~J.~Hickernell, H.~S.~Hong,
 {\it Quasi-Monte Carlo methods and their randomisations},
 in: Applied Probability, AMS/IP Studies in Advanced Mathematics, vol. 26
 (R.~Chan, Y.-K.~Kwok, D.~Yao, and Q.~Zhang, eds.),
 American Mathematical Society, Providence, 2002, pp.~59--77.

 \bibitem{KKS}
 V. Kaarnioja, F. Y. Kuo, I. H. Sloan,
 {\it Uncertainty quantification using periodic random variables},
 submitted 2019.

\bibitem{Kam13}
  L. K\"ammerer,
  {\it Reconstructing hyperbolic cross
  trigonometric polynomials from sampling along rank-$1$ lattices},
  SIAM J.\ Numer.\ Anal., {\bf 51} (2013), 2773--2796.

 \bibitem{KPV15}
 L.~K\"ammerer, D.~Potts, T.~Volkmer,
 {\it Approximation of multivariate periodic functions by trigonometric polynomials based on rank-$1$ lattice sampling},
 J.\ Complexity, \textbf{31} (2015), 543--576.

 \bibitem{KUnew}
 D.~Krieg, M.~Ullrich,
 {\it Function values are enough for $L_2$-approximation},
 arXiv:1905.02516.

 \bibitem{Kuo03}
 F.~Y.~Kuo,
 {\it Component-by-component constructions achieve the optimal rate of convergence
 for multivariate integration in weighted Korobov and Sobolev spaces},
 {J. Complexity}, \textbf{19} (2003), 301--320.

\bibitem{KMNN}
  F.~Y.~Kuo, G.~Migliorati, F.~Nobile, D.~Nuyens,
  {\it Function integration, reconstruction and approximation using rank-$1$ lattices},
  submitted 2019.

\bibitem{KN16}
  {F.~Y.~Kuo, D.~Nuyens},
  {\it Application of quasi-Monte Carlo methods to elliptic PDEs with
  random diffusion coefficients -- a survey of analysis and
  implementation},
  {Found.\ Comput.\ Math.}, {\bf 16} (2016), 1631--1696.

\bibitem{KSS11}
  F.~Y.~Kuo, Ch.~Schwab, I.~H.~Sloan,
  {\it Quasi-Monte Carlo methods for high-dimensional integration: the standard (weighted Hilbert space) setting and beyond},
  The ANZIAM Journal, {\bf 53} (2011), 1--37.

\bibitem{KSS12}
  {F.~Y.~Kuo, Ch.~Schwab, I.~H.~Sloan},
  {\it Quasi-Monte Carlo finite element methods for a class of elliptic partial
  differential equations with random coefficient},
  {SIAM J.\ Numer.\ Anal.}, {\bf 50} (2012), 3351--3374.

 \bibitem{KSWW10a}
 F.~Y.~Kuo, I.~H.~Sloan, G.~W.~Wasilkowski, H.~Wo\'zniakowski,
 {\it On decompositions of multivariate functions},
 {Math. Comp.}, \textbf{79} (2010), 953--966.

\bibitem{KSW06} F.~Y.~Kuo, I.~H.~Sloan, H.~Wo\'zniakowski,
  {\it Lattice rules for multivariate approximation in
    the worst case setting}, in:
    Monte Carlo and Quasi-Monte Carlo Methods 2004 (H.~Niederreiter and D.~Talay, eds),
    Springer, 2006, pp.~289--330.

\bibitem{KSW08} F.~Y.~Kuo, I.~H.~Sloan, H.~Wo\'zniakowski,
    {\it Lattice rule algorithms for multivariate approximation in the
    average case setting},
    J.~Complexity, {\bf 24} (2008), 283--323.

\bibitem{KWW09a} F.~Y.~Kuo, G.~W.~Wasilkowski, H.~Wo\'zniakowski,
  {\it On the power of standard information for multivariate approximation
  in the worst case setting},
 J.\ Approx.\ Theory, {\bf 158} (2009), 97--125.

 \bibitem{LM12}
 P.~L'Ecuyer, D.~Munger,
 {\it On figures of merit for randomly shifted lattice rules},
 in: \textit{Monte Carlo and Quasi-Monte Carlo Methods 2010}
 (L.~Plaskota and H.~Wo\'zniakowski, eds.),
 Springer, 2012, pp.~133--159.

 \bibitem{Lem09}
 C.~Lemieux,
 \textit{Monte Carlo and Quasi-Monte Carlo Sampling},
 Springer, New York, 2009.

 \bibitem{LP14}
  G.~Leobacher, F.~Pillichshammer,
  {\it Introduction to Quasi-Monte Carlo Integration and Applications},
  Springer, 2014.

\bibitem{LH03}
    D. Li, F. J. Hickernell,
    {\it Trigonometric spectral collocation methods on lattices}, in:
    Recent Advances in Scientific Computing and
    Partial Differential Equations (S. Y. Cheng, C.-W. Shu, and T. Tang, eds.),
    AMS Series in Contemporary Mathematics, vol. 330, American
    Mathematical Society, Providence, Rhode Island, 2003, pp.~121--132.

\bibitem{Nie92} H.~Niederreiter,
  {\it Random Number Generation and Quasi-Monte Carlo Methods}, SIAM, 1992.

\bibitem{NSW04} E.~Novak, I.~H.~Sloan, H.~Wo\'zniakowski,
    \textit{Tractability of approximation for weighted Korobov spaces on
    classical and quantum computers},
    Found.\ Comput.\ Math. \textbf{4} (2004), 121--156.

 \bibitem{NW08}
 E.~Novak, H.~Wo\'zniakowski,
 \textit{Tractability of Multivariate Problems, Volume I: Linear Information},
 EMS, Z\"urich, 2008.

 \bibitem{NW10}
 E. Novak, H.~Wo\'zniakowski,
 \textit{Tractability of Multivariate Problems, Volume II: Standard Information for Functionals},
 EMS, Z\"urich, 2010.

 \bibitem{NW12}
 E.~Novak, H.~Wo\'zniakowski,
 \textit{Tractability of Multivariate Problems, Volume III: Standard Information for Operators},
 EMS, Z\"urich, 2012.

\bibitem{Nuy14}
  {D.~Nuyens},
  {\it The construction of good lattice rules and polynomial lattice rules},
  in: Uniform Distribution and Quasi-{M}onte {C}arlo Methods
  (P.~Kritzer, H.~Niederreiter, F.~Pillichshammer, A.~Winterhof, eds.),
  Radon Series on Computational and Applied Mathematics Vol.~15,
  De Gruyter, 2014, pp.~223--256.

 \bibitem{NC06a}
 D.~Nuyens, R.~Cools,
 {\it Fast algorithms for component-by-component construction of rank-$1$ lattice rules
 in shift-invariant reproducing kernel Hilbert spaces},
 {Math. Comp.}, \textbf{75} (2006), 903--920.

 \bibitem{NC06b}
 D.~Nuyens, R.~Cools,
 {\it Fast component-by-component construction of rank-$1$ lattice rules with a non-prime number of points},
 {J.~Complexity} \textbf{22} (2006), 4--28.

 \bibitem{NC06c}
 D.~Nuyens, R.~Cools,
 {\it Fast component-by-component construction, a reprise for different kernels},
 in:
 {Monte Carlo and quasi-Monte Carlo methods 2004}
 (H.~Niederreiter, D.~Talay, eds.),
 Springer, Berlin, 2006, pp.~373--387.

 \bibitem{NSW17}
  D.~Nuyens, G.~Suryanarayana, M.~Weimar,
 {\it Construction of quasi-Monte Carlo rules for multivariate integration in
 spaces of permutation-invariant functions},
 Construc.\ Approx.\, \textbf{45} (2017), 311--344.

\bibitem{PV16} D.~Potts, T.~Volkmer,
  {\it Sparse high-dimensional FFT based on rank-$1$ lattice sampling},
  Appl. Comput. Harmon. Anal., {\bf 41} (2016), 713--748.

 \bibitem{SJ94}
 I.~H.~Sloan, S.~Joe,
 \textit{Lattice Methods for Multiple Integration},
 Oxford University Press, Oxford, 1994.

  \bibitem{SKJ02b}
  {I.~H.~Sloan, F.~Y.~Kuo, and S.~Joe},
  {\it Constructing randomly shifted lattice rules in weighted Sobolev
  spaces},
  {SIAM J.~Numer.\ Anal.}, {\bf 40} (2002), 1650--1665.

 \bibitem{SR02}
 I.~H.~Sloan, A,~V.~Reztsov,
 {\it Component-by-component construction of good lattice rules},
 \textit{Math. Comp.}, \textbf{71} (2002), 263--273.

\bibitem{SNC16} G.~Suryanarayana, D.~Nuyens, R.~Cools,
    {\it Reconstruction and collocation of a class of non-periodic functions
    by sampling along tent-transformed rank-$1$ lattices},
    J.\ Fourier Anal.\ App., {\bf 22} (2016), 187--214.

\bibitem{SW98}
  {I.~H.~Sloan, H.~Wo\'zniakowski},
  {\it When are quasi-Monte Carlo algorithms efficient for
  high-dimensional integrals?},
  {J.~Complexity}, {\bf 14} (1998), 1--33.


 \bibitem{SW01}
 I.~H.~Sloan, H.~Wo\'zniakowski,
 {\it Tractability of multivariate integration for weighted Korobov classes},
 {J. Complexity}, \textbf{17} (2001), 697--721.

\bibitem{Wah90}
  G.~Wahba,
  {\it Spline Models for Observational Data},
  SIAM, Philadelphia, 1990.

\bibitem{ZLH06} X. Y. Zeng, K. T. Leung, F. J. Hickernell,
    {\it Error analysis of splines for periodic problems using lattice designs},
    in:
    Monte Carlo and Quasi-Monte Carlo Methods 2004 (H.~Niederreiter and D.~Talay, eds), Springer, 2006, pp.~501--514.

\bibitem{ZKH09} X. Y. Zeng, P. Kritzer, F. J. Hickernell,
    {\it Spline methods using integration lattices and digital nets},
    Constr. Approx., {\bf 30} (2009), 529--555.

\end{thebibliography}

\end{document}